\title{A combinatorial perspective on the Kemeny constant and more}
\author{Luis Fredes\footnote{Univ. Bordeaux, CNRS, Bordeaux INP, IMB, UMR 5251, F-33400 Talence, France  } \textsf{ and } Jean Francois Marckert  \footnote{Univ. Bordeaux, CNRS, Bordeaux INP, LaBRI, UMR 5800, F-33400 Talence, France}}
\date{September 2023}
\newcommand{\Tr}{{\sf Tr}}
\newcommand{\Adj}{{\sf Adj}}
\newcommand{\N}{\mathbb{N}}
\newcommand{\R}{\mathbb{R}}
\newcommand{\C}{\mathbb{C}}
 \def \Id{{\sf Id}}
\def \1{\textbf{1}}
\def \E{\mathbb{E}}
\def \Geo{{\sf Geo}}
\def \G{{\sf G}}
\def \Id{{\sf Id}}
\def \M{{\sf M}}
\def \N{\mathbb{N}}
\def \P{{\mathbb{P}}}
\def \R{\mathbb{R}}
\def \bK{{\bf K}}
\def \bQ{{\bf Q}}
\def \bZ{{\bf Z}}
\def \bar{\overline}
\def \ba{\begin{align}}
\def \ba{{\bf a}}
\def \ben{\begin{eqnarray}}
\def \beq{\begin{equation}}
\def \be{\begin{eqnarray*}}
\def \bma{\begin{pmatrix}}
\def \bpar#1{\left\{\begin{array}{#1} }
\def \build#1#2#3{\mathrel{\mathop{\kern 0pt#1}\limits_{#2}^{#3}}}
\def \captionn#1{\begin{center}\begin{minipage}{17cm}\sf\caption{\small \textsf{#1}}\end{minipage}\end{center}}
\def \dis{\displaystyle}
\def \ea{\end{align}}
\def \een{\end{eqnarray}}
\def \ee{\end{eqnarray*}}
\def \ema{\end{pmatrix}}
\def \epar { \end{array}\right.}
\def \eq{\end{equation}}
\def \eref#1{(\ref{#1})}
\def \l{\left}
\def \r{\right}
\def \sous#1#2{\mathrel{\mathop{\kern 0pt#1}\limits_{#2}}}
\def \sur#1#2{\mathrel{\mathop{\kern 0pt#1}\limits^{#2}}}
\newcommand{\overbar}[1]{\overline{#1}}
\def \E{\mathbb{E}}
\theoremstyle{plain}
\newtheorem{lem}{Lemma}
\newtheorem{pro}[lem]{Proposition}
\newtheorem{theo}[lem]{Theorem}
\newtheorem{rem}[lem]{Remark}
\newtheorem{cor}[lem]{Corollary}
\numberwithin{equation}{section}
\numberwithin{lem}{section}
\numberwithin{exa}{section}
\def \1{\textbf{1}}
\newcommand{\compact}{ \topsep0pt   \itemsep=0pt   \partopsep=0pt   \parsep=0pt}
\newcounter{c}
\def \bir{\begin{itemize}\compact \setcounter{c}{0}}
\def \itr{\addtocounter{c}{1}\item[($\roman{c}$)]} 
\def \eir{\end{itemize}\vspace{-2em}~}
\newcounter{d}
\def \bia{\begin{itemize}\compact \setcounter{d}{0}}
\def \eia{\end{itemize}\vspace{-2em}~}
\newcounter{b}
\def \bi{\begin{itemize}\compact \setcounter{b}{0}}
\def \ei{\end{itemize}\vspace{-2em}~}
\theoremstyle{plain}
\numberwithin{equation}{section}
\numberwithin{lem}{section}
\begin{document}
 
\maketitle 
\begin{abstract} 
 Let $\M$ be an irreducible transition matrix on a finite state space $V$. For a Markov chain $C=(C_k,k\geq 0)$ with transition matrix $\M$, let $\tau^{\geq 1}_u$ denote the first positive hitting time of $u$ by $C$, and $\rho$ the  unique invariant measure of $\M$. Kemeny proved that if $X$ is sampled according to $\rho$ independently of $C$, the expected value of the first positive hitting time of $X$ by $C$ does not depend on the starting state of the chain: all the values $\l(\E(\tau^{\geq 1}_X~|~C_0=u), u \in V\r)$ are equal. \par
  In this paper, we show that this property follows from a more general result: the generating function \newline$\sum_{v\in V} \E\l(x^{\tau_v^{\geq 1}}~|~C_0=u\r)\det\l(\Id-x\M^{(v)}\r)$ is independent of the starting state $u$, where $\M^{(v)}$ is obtained from $\M$ by deleting the row and column corresponding to the state $v$. The factors appearing in this generating function are: first, the probability generating function of $\tau^{\geq 1}_v$, and second, the sequence of determinants $\l(\det(\Id-x\M^{(v)}),v\in V\r),$ which, for $x=1$, is known to be proportional to the invariant measure $(\rho_u,u\in V)$. 
    From this property, we deduce several further results, including relations involving higher moments of $\tau_X^{\geq 1}$, which are of independent interest. 
\end{abstract}
\section{Introduction}

\paragraph{Notation/convention.}  The identity matrix of size $d\times d$ is denoted by $\Id_d$. 

\subsection{Kemeny constant and a new functional identity}

All along the paper, $C:=(C_j,j\geq 0)$ stands for a Markov chain with irreducible transition matrix $\M$ defined on a finite set $V$ of cardinality $d$.
The invariant measures of $\M$  are all proportional  to $(\pi_v,v\in V$) defined by
 \ben \pi_v := \det\big(\Id_{d-1}-\M^{(v)}\big),~~\textrm{ for all } v \in V,\een
 where $\M^{(v)}$ is the matrix obtained from $\M$ by the suppression of the row and column associated with $v$ (this is a classical result that can be proved directly; see, e.g. \cite[Sec.1.1]{FMAB}, or by using the Markov chain tree theorem used to prove the correctness of Aldous-Broder algorithm \cite{Bro89,Al90,HLT21,FMAB}).
The invariant probability distribution of $\M$ is therefore $(\rho_u,u \in V)$ where
 \[\rho_u= \pi_u / {\bf Z} \textrm{~~~for~~~}{\bf Z}=\sum_{v\in V} \pi_v.\]
For any integer $t\geq 0$,  any state $u\in V$, let \ben\tau_u^{\geq t}=\inf\big\{k\geq t: C_k=u\big\}\een the first hitting time of state $u$ after (and including) time $t$.
 For any real-valued function $f$ of $C$, we write $\E_u(f(C))$ for the expectation of $f(C)$ conditional on $C_0 = u$, and $\P_u$ for the distribution of $C$ conditional on $C_0=u$.
 
For $u\in V$, set 
 \[Q^{\geq 1}(u)  :=\sum_{v \in V}  \E_u\l(\tau_v^{\geq 1}\r)\pi_v.\]
 \begin{pro}[Kemeny \cite{KS76}]\label{pro:Kem}
The values $\l(Q^{\geq 1}(u), u \in V\r)$
are all equal. Their common value, denoted by ${\bf Q}^{\geq 1}$, is finite.

Equivalently, if $X$ is a random variable distributed according to $\rho$ and independent of $C$, the real numbers $\l(\E_u\l(\tau_X^{\geq 1}\r), u \in V\r)$ are all equal (and finite). 
\end{pro}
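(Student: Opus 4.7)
My plan is to follow the classical matrix argument: gather the mean hitting times in a single matrix, read off a one-step identity for it, and then annihilate the factor $\Id-\M$ on the left by combining the identity with the invariant distribution on the right.

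First I would assemble the mean hitting times into a matrix $M=(m_{uv})_{u,v\in V}$ with $m_{uv}:=\E_u(\tau_v^{\geq 1})$; finiteness of each $m_{uv}$ is the standard consequence of the irreducibility of $\M$ on the finite set $V$. Conditioning on $C_1$ (if $C_1=v$ then $\tau_v^{\geq 1}=1$; otherwise by the Markov property $\tau_v^{\geq 1}=1+\tau_v^{\geq 1}\circ\theta_1$) gives, for all $u,v\in V$,
\[m_{uv}=1+\sum_{w\neq v}\M_{uw}\,m_{wv}=1+(\M M)_{uv}-\M_{uv}\,m_{vv}.\]
With $J$ the $d\times d$ all-ones matrix and $D=\mathrm{diag}(m_{vv})_{v\in V}$, this rearranges into the matrix identity
\[(\Id-\M)\,M=J-\M D. \qquad (\star)\]

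Next I would extract the classical mean return time formula from $(\star)$ by left-multiplying by the row vector $\rho^{\top}$: since $\rho^{\top}\M=\rho^{\top}$, the left-hand side vanishes, while the right-hand side equals $\mathbf{1}^{\top}-(\rho_v m_{vv})_{v\in V}^{\top}$, yielding $m_{vv}=1/\rho_v$. Then I would right-multiply $(\star)$ by the column vector $\rho$: since $J\rho=\mathbf{1}$ and $(D\rho)_v=m_{vv}\rho_v=1$, one has $\M D\rho=\M\mathbf{1}=\mathbf{1}$, so
\[(\Id-\M)(M\rho)=\mathbf{1}-\mathbf{1}=0.\]
Hence $M\rho\in\Ker(\Id-\M)$, which irreducibility of $\M$ forces to be the line spanned by $\mathbf{1}$ (Perron--Frobenius); therefore $M\rho=c\,\mathbf{1}$ for some scalar $c$, i.e.\ $\sum_{v}\rho_v m_{uv}=c$ does not depend on $u$. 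Multiplying by $\mathbf{Z}=\sum_v\pi_v$ gives the analogous statement for $Q^{\geq 1}(u)=\sum_v\pi_v m_{uv}$, which is precisely the claim.

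The plan has essentially no hard step: once $(\star)$ is written down, the whole proof is the algebraic cancellation $(\Id-\M)(M\rho)=0$, resting on two standard inputs — the first-step equation and the characterisation $\Ker(\Id-\M)=\mathbb{R}\mathbf{1}$. The only point that will require a little care is keeping the diagonal contribution $\M_{uv}m_{vv}$ correctly separated when passing from the scalar recursion to the matrix form $(\star)$, and of course invoking Perron--Frobenius to justify the one-dimensionality of the kernel; neither is a genuine obstacle.
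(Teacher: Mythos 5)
Your proof is correct, and it takes a genuinely different and more elementary route than the paper. The paper derives Proposition~\ref{pro:Kem} as a corollary of the generating-function identity of Theorem~\ref{theo:laK}, which is itself obtained from the path-decomposition Lemma~\ref{lem:essential}; Kemeny constancy falls out by extracting the linear coefficient of $\bK_u^{\geq 1}(x)=x\,\bK^{\geq 0}(x)$. You instead work directly with the classical first-step matrix identity $(\Id_d-\M)M=J-\M D$ and annihilate it against the left and right Perron eigenvectors of $\M$: pairing with $\rho^{\top}$ yields $m_{vv}=1/\rho_v$, pairing with $\rho$ then yields $(\Id_d-\M)(M\rho)=0$, and one-dimensionality of $\Ker(\Id_d-\M)=\R\,\mathbf{1}$ (Perron--Frobenius for an irreducible stochastic matrix) forces $M\rho$ to be a constant vector. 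This is shorter and self-contained for the statement at hand. What the paper's heavier machinery buys is the full polynomial identity $\bK^{\geq 0}_u(x)=\det(\Id_d-x\M)/(1-x)$, from which the higher factorial-moment relations of Corollary~\ref{cor:1} and the linear recursion for $\P_u(\tau_v^{\geq 0}=m)$ follow by further differentiation; your identity $(\star)$ alone only delivers the first moment. Your steps are all sound, including the careful separation of the diagonal term $\M_{uv}m_{vv}$ in passing from the scalar recursion to $(\star)$.
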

This constancy has been discussed in many papers, and we refer to some of them here, for historical notes and applications: Kemeny and Snell \cite[Thm. 4.4.10]{KS76}, Levene and Loizou \cite{725624ef-8ff4-3d57-a71b-eb52836bed3b}, Pitman \& Tang \cite{MR3757519}. Further discussions, connections, and references can also be found in Grinstead and Snell \cite[p.469-470]{GS}, Doyle \cite{doyle2009} and Hunter \cite{MR660986}.
 
Notice that the second statement is indeed equivalent to the first because $\E_u\l(\tau_X^{\geq 1}\r)=\sum_{v}  \E_u\l(\tau_v^{\geq 1}\r)\rho_v ={\bf Q}^{\geq 1}\,/\,{\bf Z}$. This means that the expected value of the first positive hitting time of a point chosen according to the stationary distribution does not depend on the starting state of the chain. In the literature the common value of $\E_u\l(\tau_X^{\geq 1}\r) ={\bf Q}^{\geq 1}\,/\,{\bf Z}$ is known as the \textbf{Kemeny constant}.
 
There is an analogue property, concerning the $\tau^{\geq t}_v$, the first visit time of $v$ after time $t$. 
Set 
\[ 
Q^{\geq t}(u) :=\sum_{v}  \E_u\l(\tau_v^{\geq t}\r)\pi_v ={\bf Z}\; \E_u(\tau_X^{\geq t}).
\]
A corollary of \Cref{pro:Kem} is the following:
\begin{cor} For each fixed $t\geq 0$, all the real numbers $(Q^{\geq t}(u) ,u \in V)$ are equal. Moreover, their common value ${\bf Q}^{\geq t}$ satisfies
\ben {\bf Q}^{\geq t}={\bf Z}\,t+{\bf Q}^{\geq 0}.\een
\end{cor}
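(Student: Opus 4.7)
The plan is to reduce the statement for arbitrary $t$ to the case $t=0$ via the Markov property, and then to derive the $t=0$ case from Proposition~\ref{pro:Kem} using the classical mean return time formula.

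First, I would apply the Markov property at time $t$. Writing $\tau_v^{\geq t}=t+\inf\{j\geq 0: C_{t+j}=v\}$ and conditioning on $C_t$, one gets
\[
\E_u\l(\tau_v^{\geq t}\r) = t + \sum_{w\in V} (\M^t)_{u,w}\, \E_w\l(\tau_v^{\geq 0}\r).
\]
Multiplying by $\pi_v$ and summing over $v$ yields
\[
Q^{\geq t}(u) = t\,{\bf Z} + \sum_{w\in V}(\M^t)_{u,w}\, Q^{\geq 0}(w).
\]
So if one can show that $Q^{\geq 0}(w)$ is independent of $w$, the fact that $\M$ is stochastic (rows sum to $1$) finishes the argument, giving $Q^{\geq t}(u)=t{\bf Z}+{\bf Q}^{\geq 0}$.

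It therefore remains to deduce the constancy of $Q^{\geq 0}$ from that of $Q^{\geq 1}$. The key observation is that, under $\P_u$, one has $\tau_v^{\geq 0}=\tau_v^{\geq 1}$ for $v\neq u$, while $\tau_u^{\geq 0}=0$. Splitting the sum defining $Q^{\geq 0}(u)$ according to whether $v=u$ or $v\neq u$ gives
\[
Q^{\geq 0}(u) = Q^{\geq 1}(u)-\pi_u\,\E_u\l(\tau_u^{\geq 1}\r).
\]
Now I would invoke the standard identity $\E_u(\tau_u^{\geq 1})=1/\rho_u={\bf Z}/\pi_u$ for the mean return time to $u$, so that $\pi_u\E_u(\tau_u^{\geq 1})={\bf Z}$. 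Combined with Proposition~\ref{pro:Kem}, this produces
\[
Q^{\geq 0}(u) = {\bf Q}^{\geq 1} - {\bf Z},
\]
which is indeed independent of $u$, and also identifies ${\bf Q}^{\geq 0}={\bf Q}^{\geq 1}-{\bf Z}$.

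There is no real obstacle here; the only subtlety is spotting the correct bookkeeping in the splitting step (the contribution $v=u$ is lost when going from $\tau^{\geq 1}$ to $\tau^{\geq 0}$), and recognizing that the loss is precisely $\pi_u/\rho_u={\bf Z}$, independent of $u$. Everything else follows from the Markov property and the stochasticity of $\M$.
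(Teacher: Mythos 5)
Your proof is correct and follows essentially the same two ingredients as the paper: the (strong) Markov property to reduce general $t$ to a base case, and the splitting of the $v=u$ term together with the mean return time identity $\rho_u\,\E_u(\tau_u^{\geq 1})=1$ to link $Q^{\geq 0}$ and $Q^{\geq 1}$. The only cosmetic difference is that you condition on $C_t$ and reduce everything to the $t=0$ case, whereas the paper conditions on $C_{t-1}$ to reduce to $t=1$ and then handles the $t=0$ step separately; the underlying argument is the same.
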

\begin{proof}For each $t\geq 1$, by the strong Markov property, we have
\be
\E_u\l(\tau_X^{\geq t}\r)&=&t-1+\sum_{v}\P_u(C_{t-1}=v) \E_v\l(\tau_X^{\geq 1}\r)\\
&=&t-1+\sum_{v}\P_u(C_{t-1}=v)\, {\bf Q}^{\geq 1}/{\bf Z}= t-1+{\bf Q}^{\geq 1}/{\bf Z}.
\ee
This expression is independent of $u$.
Hence, ${\bf Q}^{\geq t}={\bf Z}\,(t-1)+{\bf Q}^{\geq 1}$. It remains to prove that ${\bf Q}^{\geq 1} ={\bf Z}\,+ {\bf Q}^{\geq 0}$.
Write
  \be
  \E_u\l(\tau_X^{\geq 0}\r)&=&\E_u\l(\tau_X(\1_{\tau_X=0}+\1_{\tau_X\geq 1})\r)
  =\E_u\l(\tau_X^{\geq 1}\1_{X\neq u}\r)  =\E_u\l(\tau_X^{\geq 1}\r)-\E_u\l(\tau_X^{\geq 1}\1_{X=u}\r)\\
  &=&{\bf Q}^{\geq 1}/{\bf Z}-\E_u\l(\tau_u^{\geq 1}\r)\rho_u={\bf Q}^{\geq 1}/{\bf Z}-1.
  \ee
This last equality comes from the fact that the stationary distribution satisfies $\rho_u=1/\E_u(\tau_u^{\geq 1})$ (see e.g. \cite[Theo.1.7.7]{Norris} and Corollary \ref{cor:der:det}). 
\end{proof} 

\subsection{Main result}

For each $t\geq 0$, consider the probability-generating function of $\tau_{v}^{\geq t}$, starting from $u$:
\ben\label{eq:qsqdd}
\G_{u,v}^{\geq t}(x)&:=& \E_u\l(x^{\tau_{v}^{\geq t}}\r).
\een
It is straightforward to verify that, for all pairs $(u,v)$, $\G_{u,v}^{\geq t}$ seen as the power series $\sum_{k\geq 0} \P(\tau_{v}^{\geq t}=k)x^k$, has a radius of convergence $R_{u,v,t}$ strictly greater \footnote{Irreducibility implies that the graph $(V,E)$ where $E=\{(u,v)~:~M_{u,v}>0\}$ is strongly connected. 
Take $\Delta$ the diameter of the directed graph $(V,E)$, and $p$ the minimum, over all pairs $(u,v)\in V^2$, of the probability to go from $u$ to $v$ in at most $\Delta$ steps. Clearly $p>0$.
Hence, starting from $u$, within each $\Delta$ steps, the probability to hit $v$ is larger than $p$, which implies, {\it in fine}, that ${\cal L}(\tau_v^{\geq 0}~|~C_0=u)$ is smaller for the stochastic order than $\Delta$ multiplied by a geometric random variable $g$ with parameter $p$. The conclusion follows, since the radius of convergence of the probability generating function of   $g$ is  $R = 1/(1-p)$, and then the one of $g \Delta$ is $1/(1-p)^{1/\Delta}$, which is larger than 1.} than 1. Hence, since $V$ is finite, there exists a real number $R_t$ satisfying
\[ 1 < R_t < \min\{ R_{u,v,t}~:~ (u,v)\in V^2\},\] so that all the $\G_{u,v}^{\geq t}$  \underbar{converge normally} in the closed ball  $\bar{B}(0,R_t)$, and even in an open ball containing  $\bar{B}(0,R_t)$. Of course, here and everywhere else, power-series when considered as functions, are defined from (a subset of) $\C$ to $\C$. 
For any $u\in V$, define the polynomial
\ben \pi_u(x)&:=& \det \l(\Id_{d-1} - x\,\M^{(u)}\r).\een
Observe that its evaluation at $x=1$ allows to recover $\pi_u$,
\[\pi_u(1) = \pi_u,~~\forall u\in V.\]
For each $t\geq 0$, each $u\in V$, define the power-series
\ben\label{eq:Kut}
 \bK_u^{\geq t}(x)&=& \sum_{v\in V} \G_{u,v}^{\geq t}(x) \pi_v(x).\label{eq:JF}
 \een
The series $\bK_u^{\geq t}(x)$ (whose coefficients are obtained by taking the Cauchy product of the coefficients of $ \G_{u,v}^{\geq t}(x)$ and those of $\pi_v(x)$) is absolutely convergent on  $\overbar{B}(0,R_t)$ as a product of two absolutely convergent series on this set.
  
The following new theorem provides a \underbar{functional analogue} of the Kemeny constancy theorem: 
\begin{theo}\label{theo:laK}
  \begin{itemize}
  \item[$(i)$] The power series $\l(\bK_u^{\geq 0}(x), u \in V\r)$ are all equal on $\bar{B}(0,R_t)$.
  \item[$(ii)$] Their common values  on $\bar{B}(0,R_t)$ is the polynomial of degree $d-1$,
  \ben\label{eq:laK} \bK^{\geq 0}(x) :={\det\big(\Id_d-x\;\M\big)}\;/\;{(1-x)}.\een 
  \end{itemize}
\end{theo}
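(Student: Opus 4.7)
The plan is to recognize $\bK_u^{\geq 0}(x)$ as the $u$-th coordinate of the vector $\Adj(\Id_d - x\M)\,\1$, and then to exploit the fact that $\1=(1,\dots,1)^T$ is a right eigenvector of $\M$ with eigenvalue $1$.

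First I would introduce the Green function $N_{u,v}(x) := \sum_{k \geq 0} \P_u(C_k = v)\, x^k$, which on $\bar{B}(0,R_0)$ coincides with the $(u,v)$-entry of $(\Id_d - x\M)^{-1}$. A first-passage decomposition, conditioning on the value of $\tau_v^{\geq 0}$ and invoking the strong Markov property, should produce the classical convolution identity
\[ N_{u,v}(x) = \G_{u,v}^{\geq 0}(x)\, N_{v,v}(x),\qquad u,v\in V,\]
whose $u=v$ case is trivial since $\G_{v,v}^{\geq 0}(x)\equiv 1$. Cramer's rule then gives $N_{u,v}(x) = [\Adj(\Id_d - x\M)]_{u,v}/\det(\Id_d - x\M)$ and, in particular, $N_{v,v}(x) = \pi_v(x)/\det(\Id_d - x\M)$. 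Substituting and simplifying should yield the compact identity
\[ \G_{u,v}^{\geq 0}(x)\, \pi_v(x) = \big[\Adj(\Id_d - x\M)\big]_{u,v},\]
initially on the set where $\det(\Id_d - x\M) \neq 0$ inside $\bar{B}(0,R_0)$, and then on the whole disk by analytic continuation (both sides being analytic there).

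Next I would sum this identity over $v\in V$, obtaining $\bK_u^{\geq 0}(x) = \big[\Adj(\Id_d - x\M)\,\1\big]_u$. Because $\M$ is a transition matrix, $\M\,\1 = \1$, so $(\Id_d - x\M)\,\1 = (1-x)\,\1$. Multiplying this relation on the left by $\Adj(\Id_d - x\M)$ and using the universal identity $\Adj(A)\,A = \det(A)\,\Id_d$ gives
\[ (1-x)\,\Adj(\Id_d - x\M)\,\1 \;=\; \det(\Id_d - x\M)\,\1, \]
hence $\Adj(\Id_d - x\M)\,\1 = \big[\det(\Id_d - x\M)/(1-x)\big]\,\1$. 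The right-hand side is a polynomial of degree $d-1$ because $1$ is always a root of $\det(\Id_d - x\M)$ (the factor $1-x$ divides the characteristic polynomial, since $\1$ is a right eigenvector). Extracting the $u$-th coordinate proves $(i)$ and $(ii)$ simultaneously: the expression is independent of $u$, and it equals $\det(\Id_d - x\M)/(1-x)$.

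The whole argument is a chain of standard manipulations, so I do not expect a serious obstacle. The one step involving any real content is the first-passage decomposition $N_{u,v}(x) = \G_{u,v}^{\geq 0}(x) N_{v,v}(x)$, which is entirely routine; everything else is bookkeeping, plus a small analytic continuation argument to handle the removable singularity at $x=1$ caused by the factor $\det(\Id_d - x\M)$ vanishing there.
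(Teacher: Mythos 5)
Your proof is correct and follows essentially the same route as the paper. The core of both arguments is the first-passage (strong Markov) decomposition $N_{u,v}(x)=\G_{u,v}^{\geq 0}(x)\,N_{v,v}(x)$, followed by Cramer's rule to reach the polynomial identity $\G_{u,v}^{\geq 0}(x)\,\pi_v(x)=\bigl[\Adj(\Id_d-x\M)\bigr]_{u,v}$ (the paper records this as Corollary~2.1, proving it through a combinatorial weight-preserving decomposition of paths, which is the same decomposition packaged differently). The only genuine difference is in how you conclude after summing over $v$: you use the adjugate identity $\Adj(A)A=\det(A)\Id_d$ together with $\M\1=\1$ to obtain $\Adj(\Id_d-x\M)\1=\bigl[\det(\Id_d-x\M)/(1-x)\bigr]\1$; the paper instead multiplies by $1-x$ and recognizes $(1-x)\bigl[(\Id_d-x\M)^{-1}\bigr]_{u,v}$ as $\P_u(C_{\Geo(x)}=v)$, concluding because probabilities sum to $1$. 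These two steps are equivalent (row-stochasticity of $\M$ in algebraic vs.\ probabilistic dress); yours is more purely linear-algebraic, the paper's gives a probabilistic interpretation of the common value.

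One small inaccuracy worth fixing: you write that $N_{u,v}(x)=\sum_{k\geq 0}\P_u(C_k=v)x^k$ ``on $\bar B(0,R_0)$ coincides with the $(u,v)$-entry of $(\Id_d-x\M)^{-1}$.'' As a power series $N_{u,v}$ has radius of convergence exactly $1$, not $R_0>1$ (it diverges at $x=1$, as the paper notes). The decomposition $N_{u,v}(x)=\G_{u,v}^{\geq 0}(x)N_{v,v}(x)$ and the Cramer step therefore hold on $B(0,1)$ only; but since you then obtain the cleared identity $\G_{u,v}^{\geq 0}(x)\,\pi_v(x)=\bigl[\Adj(\Id_d-x\M)\bigr]_{u,v}$ between functions that are both analytic on $\bar B(0,R_0)$, your analytic-continuation step does carry the identity to the larger disk, so the argument as a whole is sound.
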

\noindent  Since $\M$ has eigenvalue 1,  $\det(\Id_d-M)=0$ and then $(1-x)$ divides the polynomial $\det(\Id_d-x\M)$: it implies that $\bK^{\geq 0}(x)$ is then a polynomial with degree $|V|-1$. 

\begin{rem}\label{rem:warning}   Apart from special cases, $\E_u\l(x^{\tau_v^{\geq 0}}\r)$ is a power series, with an infinite number of terms and radius of convergence $R_{u,v,0}>1$ as already stated. \footnote{The function $G_{u,v}^{\geq t}$ seen as power series, has in general a finite radius of convergence (it is easy to see that all the $G_{u,v}^{\geq 1}$ have an infinite radius of convergence if and only if $\M$ is the transition matrix of a deterministic Markov chain).} Hence the fact that $\bK_u^{\geq 0}(x)$ is a polynomial, reveals huge algebraic cancellations in the sum \eqref{eq:JF}. Since $\bK_{u}^{\geq t}$ is a polynomial and then holomorphic, and the definition  \eref{eq:Kut} is that of a power series, it is tempting to think that the identity given in $(ii)$ is valid over the complete set $\C$, but it is not the case. 
    As a matter of fact, in general, $G_{u,v}^{\geq t}$ does not converge in an interval of the form  $[r_{u,v,t},+\infty)$ for a finite constant $r_{u,v,t}$. Hence in principle $\bK_u^{\geq t}$ is not defined on this set.
    \end{rem}

We have the following Corollary of \Cref{theo:laK}$(i)$:
\begin{cor}\label{cor:gen} For each fixed $t\geq 0$, the power series $\l(\bK_u^{\geq t}(x), u \in V\r)$ are all equal inside $\bar{B}(0,R_t)$, and their common value is 
  \begin{align}\label{eq:imp}
  \bK^{\geq t}(x)=x^t \bK^{\geq 0}(x),
  \end{align}
\end{cor}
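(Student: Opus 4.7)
The plan is to reduce the statement for general $t\geq 0$ to the case $t=0$ already handled by \Cref{theo:laK}(i), via a single application of the Markov property at time $t$. Writing $\theta_t$ for the shift, we have, for every $v\in V$ and every $u\in V$,
\[
\tau_v^{\geq t}=t+\tau_v^{\geq 0}\circ \theta_t,
\]
so conditioning on the value of $C_t$ and using that $(C_{t+k})_{k\geq 0}$ under $\P_u(\,\cdot\,\mid C_t=w)$ has the same law as $(C_k)_{k\geq 0}$ under $\P_w$, we get the power-series identity
\[
\G_{u,v}^{\geq t}(x)=x^t \sum_{w\in V}\P_u(C_t=w)\,\G_{w,v}^{\geq 0}(x).
\]

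Next I would substitute this into the definition \eqref{eq:Kut} of $\bK_u^{\geq t}(x)$ and swap the two (finite) sums over $v$ and $w$:
\[
\bK_u^{\geq t}(x)=\sum_{v\in V}\G_{u,v}^{\geq t}(x)\,\pi_v(x)
=x^t\sum_{w\in V}\P_u(C_t=w)\sum_{v\in V}\G_{w,v}^{\geq 0}(x)\,\pi_v(x)
=x^t\sum_{w\in V}\P_u(C_t=w)\,\bK_w^{\geq 0}(x).
\]
By \Cref{theo:laK}(i), the series $\bK_w^{\geq 0}(x)$ does not depend on $w$ and equals $\bK^{\geq 0}(x)$. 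Pulling this constant out of the sum and using $\sum_{w\in V}\P_u(C_t=w)=1$ yields
\[
\bK_u^{\geq t}(x)=x^t\,\bK^{\geq 0}(x),
\]
which is independent of $u$ and proves \eqref{eq:imp}.

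The only non-cosmetic point to justify is that the above manipulations are legitimate on $\overline{B}(0,R_t)$. The identity at the level of formal power series requires nothing beyond the Markov property. For the analytic identity, the sums over $v\in V$ and $w\in V$ are finite, and each $\G_{u,v}^{\geq t}$ and each $\G_{w,v}^{\geq 0}$ converges normally on $\overline{B}(0,R_t)$ by the choice of $R_t$ (up to replacing $R_t$ by $\min(R_t,R_0)$, which is still strictly larger than $1$). Hence no interchange of limits is at stake, and the conclusion is immediate. The main (minor) obstacle is simply to recognize that after reorganizing the double sum the inner sum is exactly $\bK_w^{\geq 0}(x)$, which is the quantity to which the $t=0$ theorem applies.
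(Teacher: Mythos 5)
Your proof is correct and follows essentially the same route as the paper: apply the (strong) Markov property at time $t$ to get $\G_{u,v}^{\geq t}(x)=x^t\sum_w\P_u(C_t=w)\G_{w,v}^{\geq 0}(x)$, substitute into the definition of $\bK_u^{\geq t}$, swap the finite sums, and invoke \Cref{theo:laK}$(i)$ to factor out the $w$-independent value $\bK^{\geq 0}(x)$. The extra discussion you give about convergence on $\overline{B}(0,R_t)$ is sound but not needed beyond what the paper already set up.
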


 \begin{proof}
Inside $\bar{B}(0,R_t)$ (and even beyond, in an open disk containing this set), by the strong Markov property, $ \E_u\l(x^{\tau_v^{\geq t}}\r)=x^t\sum_{w}\P_u(C_t=w)\E_w\l(x^{\tau_v^{\geq 0}}\r)$, and thus
\ben
\bK^{\geq t}_u(x)=\sum_v\E_u\l(x^{\tau_v^{\geq t}}\r)\pi_v(x)&=&x^t\sum_{w}\P_u(C_t=w) \l(\sum_v\E_w\l(x^{\tau_v^{\geq 0}}\r)\pi_v(x)\r)
= x^t \times 1 \times \bK^{\geq 0}(x).\een
\end{proof}

The proof of \Cref{theo:laK} entirely relies on the following identity, which itself is just an instance of a well known theorem of combinatorics:
\begin{lem}\label{lem:essential} For all $u,v\in V$, for $x$ in the open ball  $B(0,1)$
\ben\label{eq:dqfd} G_{u,v}^{\geq 0}(x)\frac{ \det\big(\Id_{d-1}-x\M^{(v)}\big)}{\det\big(\Id_d-x\M\big)} =\l[\big(\Id_d-x\M\big)^{-1}\r]_{u,v},\een
\end{lem}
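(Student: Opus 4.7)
The plan is to combine three classical ingredients: the Neumann series expansion of $(\Id_d-x\M)^{-1}$, a first-passage (strong Markov) decomposition of the trajectory at its first visit to $v$, and the cofactor formula for the diagonal entry of a matrix inverse.

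First, for $x\in B(0,1)$, the eigenvalues of $x\M$ lie in the open unit disk (since $\M$ has spectral radius $1$), so $\Id_d-x\M$ is invertible and the Neumann series converges absolutely entrywise. This yields the Green-function identity
\[
\bigl[(\Id_d-x\M)^{-1}\bigr]_{u,v} \;=\; \sum_{k\geq 0} x^k\,[\M^k]_{u,v} \;=\; \sum_{k\geq 0} x^k\,\P_u(C_k=v).
\]

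Next, applying the strong Markov property at the stopping time $\tau_v^{\geq 0}$, I decompose the event $\{C_k=v\}$ according to the first time $j$ at which the chain visits $v$:
\[
\P_u(C_k=v) \;=\; \sum_{j=0}^{k} \P_u\bigl(\tau_v^{\geq 0}=j\bigr)\;\P_v(C_{k-j}=v),
\]
which remains valid when $u=v$ since then $\tau_v^{\geq 0}=0$ almost surely. Multiplying by $x^k$, summing over $k$, and recognizing a Cauchy product of two power series that are absolutely convergent on $B(0,1)$ (all their coefficients lie in $[0,1]$), I obtain
\[
\bigl[(\Id_d-x\M)^{-1}\bigr]_{u,v} \;=\; \G_{u,v}^{\geq 0}(x)\;\bigl[(\Id_d-x\M)^{-1}\bigr]_{v,v}.
\]

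Finally, the adjugate formula (Cramer's rule) applied to the $(v,v)$ diagonal entry gives
\[
\bigl[(\Id_d-x\M)^{-1}\bigr]_{v,v} \;=\; \frac{\det\bigl(\Id_{d-1}-x\M^{(v)}\bigr)}{\det\bigl(\Id_d-x\M\bigr)},
\]
and substituting into the previous identity produces the claim. The argument is essentially routine; the only mild subtlety is to justify the interchange of summations producing the Cauchy product, which follows from absolute convergence on $B(0,1)$. I do not expect any real obstacle.
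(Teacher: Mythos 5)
Your proof is essentially the same argument as the paper's: both decompose a trajectory from $u$ to $v$ at the first visit to $v$ (the paper phrases this combinatorially via path-weight factorization, while you phrase it probabilistically via the strong Markov property at $\tau_v^{\geq 0}$, but these are the same thing), then conclude via the adjugate/cofactor expression for $\bigl[(\Id_d-x\M)^{-1}\bigr]_{v,v}$. The proposal is correct and complete.
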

\begin{proof}
  By writing  $\l[\big(\Id_d-x\;\M\big)^{-1}\r]_{u,v}=\sum_{k\geq 0} \l[(x\M)^k\r]_{u,v}$ it appears that the right hand side is the generating function of the paths of any length from $u$ to $v$, where the weight of a path $q=(q_0=u,\cdots,q_k=v)$ is  
  \[W(q):=\prod_{j=1}^k (xM_{q_{j-1},q_j}) = x^k\prod_{j=1}^k M_{q_{j-1},q_j},\] or in other words, the product of the elementary weight $w_{a,b}=xM_{a,b}$ associated with each step $(a,b)$. Notice that multiple passages to $v$ or to any vertices are allowed.\par
  Notice that the series $\sum_{k\geq 0} \l[(x\M)^k\r]_{u,v}$ is absolutely convergent in the open ball  $B(0,1)$, but not convergent at 1.  This is a consequence of the ergodic theorem, since $\E_u(\sum_{k=1}^n \1_{C_k=v})=\sum_{k=0}^n \l[\M^k\r]_{u,v}$ behaves at the first order as $n\rho_v$.
  
But such a path $q=(q_0,\cdots,q_k)$ can be decomposed  (univocally) in two paths $(p,s)$ as follows.\\
-- In $p$ we store the prefix of $q$ up to its first passage time $t$ at $v$, and then get $p=(q_0,\cdots,q_t)$.\\
-- In $s$, we store the suffix of the path $q$, from time $t$, that is we set  $s=(q_t,\cdots,q_k)$.

We take the first position $s_0$ of $s$ to be $s_0=q_t=v$ so that the path $s$ is never empty, and it is a cycle from $v$ to $v$ (with multiple passages at any vertices allowed again). Of course, each given step of $p$ is now encoded once either as a step in $q$ or as a step in $s$. Hence
\[W(q) = W(p) \times W(s).\]
Hence,  since $G_{u,v}^{\geq 0}(x)$ is the sum of the weights of all possible prefixes, and $\l[\big(\Id_d-x\M\big)^{-1}\r]_{v,v}$ is the sum of all possible suffixes, we have
\ben\label{eq:decMm1} \l[\big(\Id_d-x\M\big)^{-1}\r]_{u,v}=  G_{u,v}^{\geq 0}(x) \l[\big(\Id_d-x\M\big)^{-1}\r]_{v,v}.\een
Hence it remains to prove that 
\ben  \l[\big(\Id_d-x\M\big)^{-1}\r]_{v,v}= \frac{\det\big(\Id_{d-1}-x\M^{(v)}\big)}{\det\big(\Id_d-x\M\big)},\een
and this is the formula provided by the Laplace method when one computes the inverse of a matrix.
\end{proof}

\subsection{Deduction of  \Cref{theo:laK} from \Cref{lem:essential}}

Let $\Geo(x)$ be a geometric random variable defined on $\{0,1,2,\cdots\}$ whose distribution is 
\[\P(\Geo(x)=k)=x^k(1-x),~~~~~ k\geq 0.\]
This is the common geometric distribution with support $\N:=\{0,1,2,\cdots\}$, and success parameter $1-x$. It is a distribution on $\N$ for $x\in [0,1)$.
Now, let $C_{\Geo(x)}$ be the position of the Markov chain $C$ observed at time $\Geo(x)$, where $\Geo(x)$ is independent of $C$.

Recall that $\pi_v(x)=\det(\Id_{d-1}-x\M^{(v)})$. Multiplying both sides of \eref{eq:dqfd} by $1-x$, we get the equivalent expression  (inside $B(0,1)$)
\be
(1-x)\frac{G_{u,v}^{\geq 0}(x)\pi_v(x)}{\det(\Id_d-x\M)} &=&(1-x)\l[(\Id_d-x\M)^{-1}\r]_{u,v}=\sum_{k\geq 0}(1-x)x^kM^k_{u,v}.\ee
For $x\in[0,1]$, this quantity corresponds to 
\be  \P_u(C_{\Geo(x)}=v).
\ee
Hence, for $x\in[0,1)$, from this identity, Theorem  \ref{theo:laK} is indeed trivial! 
The random variable $C_{\Geo(x)}$ is clearly well defined, and therefore $\sum_{v\in V} \P_u(C_{\Geo(x)}=v)=1$, so that $\dis\sum_{v\in V} (1-x)\frac{G_{u,v}^{\geq 0}(x)\pi_v(x)}{\det(\Id_d-x\M)}=1$, which is independent of $u$, and immediately equivalent to  \Cref{theo:laK}, except for a small detail: we have used \Cref{lem:essential} which is valid inside $B(0,1)$, and the properties of the geometric distribution that are valid on the interval $[0,1)$ only, while proving an identity on the larger set $\bar{B}(0,R_0)$. However, the identity of \Cref{theo:laK} is an identity between holomorphic functions, and we can use the identity theorem to extend this equality from $(0,1)$ to a larger domain, on which we know that the function ${\det\big(\Id_d-x\;\M\big)}\;/\;{(1-x)}$ as well as  $\bK^{\geq 0}$ are analytic.  By the discussion in \Cref{rem:warning} (the fact that $G_{u,v}^{\geq t}$ is not well defined on $\C$, but is well defined and analytic on $\overbar{B}(0,R_t)$), we deduce that the two functions coincide on $\bar{B}(0,R_0)$ so that \Cref{theo:laK} holds.
 
\subsection{Deduction of Kemeny's constancy Theorem from Theorem  \ref{theo:laK}}

For a power series ${\bf H}(x)=\sum_{m\geq 0} H_mx^m$, the $m$th coefficient $[x^m]{\bf H}(x)$ can be extracted using the $m$th derivative. More precisely, $H_m ={\bf H}^{(m)}(0)/m!$. Hence 
 \ben\label{eq:ssum}  \pi^{(m)}_v(0)/m!= [x^m]\det\big(\Id_{d-1}-x\M^{(v)}\big),\een
and we have also $\pi_v^{(0)}(0)=\pi_v(0)=1$ for all $v\in V$, and all the $\pi^{(m)}_v(0)/m!$ are equal to zero for $m\geq d$, since $\M^{(v)}$ has size $d-1$.

Theorem  \ref{theo:laK} states an equality of generating functions on non trivial balls, and its independence of $u$. This equality  implies equality of their $m$th coefficients. 
 
Denote by $K_{u,m}^{\geq t}$ the $m$th coefficient of $\bK_{u}^{\geq t}$. Since by Theorem  \ref{theo:laK}, $K_{u,m}^{\geq t}$ does not depend on $u$, denote by $K_{m}^{\geq t}$ their common value. The  number $K_m^{\geq t}$  can be expressed as a \underbar{finite sum}:
\ben\label{eq:qfgrfyk}K_{u,m}^{\geq t}=K_m^{\geq t} = \sum_{s=0}^{m \wedge (|V|-1)} \sum_{v\in V}\P_u\l(\tau_v^{\geq t}=m-s\r) \frac{\pi^{(s)}_v(0)}{s!}.\een

Take $t=1$, and write
\ben\label{eq:geid}  \sum_m m K_{u,m}^{\geq 1} &=& \sum_m m K_{m}^{\geq 1} \\&=& \sum_{r\geq 0} \sum_{s=0}^{|V|-1} \sum_{v\in V}\P_u\l(\tau_v^{\geq 1}=r\r) (r+s)\frac{\pi^{(s)}_v(0)}{s!}\\
\label{eq:geid2}&=&  \sum_{v\in V}\l(\sum_{r\geq 0} \P_u\l(\tau_v^{\geq 1}=r\r) r\r)\sum_{s=0}^{|V|-1}\frac{\pi^{(s)}_v(0)}{s!} +  \sum_{v\in V}\l(\sum_{r\geq 0}\P_u\l(\tau_v^{\geq 1}=r\r)\r)  \sum_{s=0}^{|V|-1} s\frac{\pi^{(s)}_v(0)}{s!}\\
\label{eq:geid3}&=& \E_u\bigl(\tau_X^{\geq 1}\bigr)   +
  \sum_{v\in V} \sum_{s=1}^{|V|-1}\frac{\pi^{(s)}_v(0)}{(s-1)!}.
\een
In this last identity, only $\E_u\bigl(\tau_X^{\geq 1}\bigr)$ is a function of $u$, and therefore it must be constant.

\section{Discussion and consequences}

\subsection{The sequence $(\P_u(\tau_v^{\geq 0}=m),m\geq 0)$ satisfies a finite linear recursion}

In the proof of \Cref{lem:essential}, we have established that: 
\begin{cor}\label{cor:basic}
    The matrix $\bma G_{u,v}^{\geq 0}(x)\pi_v(x)\ema_{u,v\in |V|}$ is equal to the adjugate matrix $\Adj(\Id_d-x\M)$ for $x\in B(0,1)$.
\end{cor}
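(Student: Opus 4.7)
The plan is to recognize this as an immediate rearrangement of \Cref{lem:essential} via the classical adjugate identity. First, I would verify invertibility of $\Id_d - x\M$ on $B(0,1)$: since $\M$ is stochastic it has spectral radius $1$, so $x\M$ has spectral radius $|x|<1$ and consequently $\det(\Id_d - x\M)\neq 0$ throughout the open unit disk. This ensures that the right-hand side of \eqref{eq:dqfd}, which carries a factor $\det(\Id_d - x\M)$ in its denominator, is well defined on all of $B(0,1)$.

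Next, I would invoke the defining property of the adjugate, $\Adj(A) = \det(A)\cdot A^{-1}$, valid for any invertible square matrix $A$, applied entry-wise to $A = \Id_d - x\M$:
\[
[\Adj(\Id_d - x\M)]_{u,v} = \det(\Id_d - x\M)\cdot \bigl[(\Id_d - x\M)^{-1}\bigr]_{u,v}.
\]
Substituting for $[(\Id_d - x\M)^{-1}]_{u,v}$ using \eqref{eq:dqfd} from \Cref{lem:essential} and clearing the determinant factor then yields
\[
[\Adj(\Id_d - x\M)]_{u,v} = G_{u,v}^{\geq 0}(x)\,\pi_v(x),
\]
valid for every $(u,v) \in V^2$ and every $x \in B(0,1)$. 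This is exactly the claimed matrix identity.

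Since the result is essentially a cosmetic rewrite of \Cref{lem:essential}, there is no substantive obstacle. The only point worth mentioning is an \emph{a posteriori} observation: because the entries of $\Adj(\Id_d - x\M)$ are polynomials in $x$, the identity forces each $G_{u,v}^{\geq 0}(x)$ to be a rational function whose denominator divides $\pi_v(x)$. This is a local instance of the algebraic cancellation phenomenon highlighted in \Cref{rem:warning} for the sums $\bK_u^{\geq 0}(x)$.
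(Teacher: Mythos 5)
Your proof is correct and follows essentially the same route as the paper, which simply observes that the identity was already established in the course of proving \Cref{lem:essential}: multiplying \eqref{eq:dqfd} by $\det(\Id_d - x\M)$ and recognizing the adjugate via $\Adj(A) = \det(A)\,A^{-1}$ is exactly the intended reading. Your preliminary spectral-radius check of invertibility on $B(0,1)$ and the closing remark on the forced rationality of $G_{u,v}^{\geq 0}$ are both sound additions, though not part of the paper's (implicit) argument.
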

Notice that the coefficients of  $\Adj(\Id_d-x\M)$ are polynomials of degree $|V|-1$. Developing $G_{u,v}^{\geq 0}(x).\pi_v(x)$ one gets that 
\[
    [x^m]G_{u,v}^{\geq 0}(x).\pi_v(x) = \sum_{s=0}^{m\wedge (|V|-1)} \P_u(\tau_v^{\geq 0} = m-s)\frac{\pi_v^{(s)}(0)}{s!}.
\]
Therefore if $m\geq |V|$, the coefficient $[x^m]G_{u,v}^{\geq 0}(x)\pi_v(x) = \sum_{s=0}^{m\wedge (|V|-1)} \P_u(\tau_v^{\geq 0} = m-s)\frac{\pi_v^{(s)}(0)}{s!}=0$. Using the fact that the degree of $\pi_v(x)$ is $|V|-1$, we obtain the following recursion defining the law of $\tau_v^{\geq 0}$:
\begin{lem} For $m>|V|-1$,

  \[  \P_u(\tau_v^{\geq 0} = m)= -\sum_{s=1}^{|V|-1} \P_u(\tau_v^{\geq 0} = m-s)\frac{\pi_v^{(s)}(0)}{s!}\]
\end{lem}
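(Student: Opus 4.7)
The plan is to read the desired recursion directly from Corollary \ref{cor:basic}, which tells us that $G_{u,v}^{\geq 0}(x)\pi_v(x)$ equals the $(u,v)$-entry of $\Adj(\Id_d-x\M)$. Since every entry of the adjugate matrix of $\Id_d - x\M$ is (by the cofactor expansion) a polynomial in $x$ of degree at most $|V|-1$, we immediately have the vanishing
\[
[x^m]\bigl(G_{u,v}^{\geq 0}(x)\pi_v(x)\bigr) = 0 \quad \text{for all } m \geq |V|.
\]

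Next I would expand the left-hand side by the Cauchy product: since $G_{u,v}^{\geq 0}(x)=\sum_{k\geq 0}\P_u(\tau_v^{\geq 0}=k)\,x^k$ and $\pi_v(x)=\sum_{s=0}^{|V|-1}\frac{\pi_v^{(s)}(0)}{s!}x^s$ (the degree bound comes from $\M^{(v)}$ being of size $(|V|-1)\times (|V|-1)$), I get
\[
[x^m]\bigl(G_{u,v}^{\geq 0}(x)\pi_v(x)\bigr) = \sum_{s=0}^{|V|-1}\P_u(\tau_v^{\geq 0}=m-s)\,\frac{\pi_v^{(s)}(0)}{s!},
\]
valid whenever $m\geq |V|-1$ so that the sum range $s=0,\dots,|V|-1$ is the effective one (no artificial truncation).

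Finally, I would separate out the $s=0$ term using the evaluation $\pi_v(0)=\det(\Id_{d-1})=1$. Combining with the previous vanishing statement yields, for every $m>|V|-1$,
\[
\P_u(\tau_v^{\geq 0}=m) + \sum_{s=1}^{|V|-1}\P_u(\tau_v^{\geq 0}=m-s)\,\frac{\pi_v^{(s)}(0)}{s!}=0,
\]
which rearranges to the claimed identity. There is essentially no obstacle: the only subtle point to verify is that Corollary \ref{cor:basic} (which holds on $B(0,1)$) genuinely gives a polynomial identity, but this follows because two power series that agree on a set with an accumulation point and one of which is a polynomial must be equal coefficient-by-coefficient.
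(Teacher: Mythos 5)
Your proposal is correct and follows the paper's argument exactly: you read off from Corollary~\ref{cor:basic} that $G_{u,v}^{\geq 0}(x)\pi_v(x)$ is a polynomial of degree at most $|V|-1$, expand it as a Cauchy product, observe that $[x^m]$ vanishes for $m\geq|V|$, and isolate the $s=0$ term using $\pi_v(0)=1$. This is precisely the paper's proof.
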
\begin{proof}
The only argument of the proof is that $
    \sum_{s=0}^{|V|-1} \P_u(\tau_v^{\geq 0} = m-s)\frac{\pi_v^{(s)}(0)}{s!}=0$
    and $\pi_v^{(0)}(0)/0!=1$. 
  \end{proof}

\subsection{Factorial moments of hitting times identities}
    
Consider the descending factorial numbers defined by 
\ben{\sf Fac}_k(n)=n!/(n-k)!=n(n-1)\cdots (n-k+1), \textrm{ for } n\geq 1  \textrm{ and }k\geq 0 \een
and ${\sf Fac}_0(n)=1$. When we have the probability generating function $g(x)=\E(x^Y)$ of a random variable taking its values in $\N=\{0,1,2,\cdots\}$, its derivative gives access to the factorial moments, by the formula $g^{(m)}(1)= \E({\sf Fac}_m(Y))$. Hence, for $d\geq 0$ and any $u,v\in V$,
\ben \label{eq:fac} 
\G_{u,v}^{\geq 1, (d)}(1)  
&=& \E_u\l(\l(\tau^{\geq 1}_v-0\r)\cdots\l(\tau^{\geq 1}_{v}-d+1\r)\r)=\E_u\l({\sf Fac}_d(\tau^{\geq 1}_v)\r)<+\infty;
\een
notice that this formula is indeed valid also for $d=0$ because $\G^{(0)}_{u,v}(1)=\E_u\l(x^{\tau_v^{\geq 1}}\r)~|_{x=1}=1=\E_u\l({\sf Fac}_0(\tau^{\geq 1}_v)\r)$ since $\tau^{\geq 1}_v$ is almost surely finite. 
By differentiating \eqref{eq:JF} $n$ times and then evaluating at $x=1$, we obtain
\ben  \bK^{\geq 1,(n)}(1)= \sum_{s=0}^n \binom{n}{s} \G_{u,v}^{\geq 1,(s)}(1)\, \pi_v^{(n-s)}(1).\een

\begin{cor}\label{cor:1} For all $n\geq 0$, all $u\in V$,
  \ben\label{eq:qgt}
  \bK^{\geq 1,(n)}(1)/n! 
  &=& \sum_{s=0}^{n\wedge (|V|-1)}  \sum_{v\in V} \frac{\E_u\l({\sf Fac}_{n-s}(\tau^{\geq 1}_v)\r)}{(n-s)!}  \;  \frac{\pi^{(s)}_v(1)}{s!}.
\een

In particular,  for $n=1$, for all $u\in V$,
\ben
\bK^{\geq 1,(1)}(1)   &=& \sum_{v\in V} \E_u(\tau_v^{\geq 1})\pi_v + \sum_{v\in V}\pi^{(1)}_v(1), 
\een
so that ${\bf Q}^{\geq 1}(u)=\sum_{v\in V} \E_u(\tau_v^{\geq 1})\pi_v$ does not depend on $u$ (this is the Kemeny theorem).
\end{cor}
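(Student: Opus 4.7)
The plan is to differentiate the defining identity \eqref{eq:JF} term-by-term $n$ times via Leibniz's rule, evaluate at $x=1$, and translate each factor into analytic/probabilistic data. By \Cref{cor:gen}, the common value $\bK^{\geq 1}(x)$ equals $\bK_u^{\geq 1}(x)$ for every $u \in V$, so it is enough to compute $\bK_u^{\geq 1,(n)}(1)/n!$ for any fixed starting state $u$ and observe that the result must be $u$-independent.

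First I would justify the term-by-term differentiation. Since the radius of convergence $R_1$ strictly exceeds $1$ and $V$ is finite, the series $\bK_u^{\geq 1}(x) = \sum_{v} \G_{u,v}^{\geq 1}(x)\,\pi_v(x)$ converges normally on a closed ball containing $1$ in its interior, is therefore holomorphic there, and may be differentiated any number of times inside the sums. Applying Leibniz's rule to each product $\G_{u,v}^{\geq 1}(x)\,\pi_v(x)$ and evaluating at $x=1$ yields
\[
\bK_u^{\geq 1,(n)}(1) \;=\; \sum_{v\in V} \sum_{s=0}^{n} \binom{n}{s}\,\pi_v^{(s)}(1)\,\G_{u,v}^{\geq 1,(n-s)}(1).
\]
Two substitutions then produce \eqref{eq:qgt}: (a) since $\pi_v$ is a polynomial of degree at most $|V|-1$, one has $\pi_v^{(s)}(1)=0$ for $s \geq |V|$, which truncates the inner sum at $s \leq n \wedge (|V|-1)$; and (b) by the factorial-moment identity \eqref{eq:fac}, $\G_{u,v}^{\geq 1,(n-s)}(1) = \E_u({\sf Fac}_{n-s}(\tau_v^{\geq 1}))$, all these moments being finite thanks to $R_1 > 1$. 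Dividing by $n!$ and rewriting $\binom{n}{s}/n! = 1/\bigl(s!(n-s)!\bigr)$ gives exactly the stated formula.

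For the $n=1$ specialization, the inner sum collapses to $s\in\{0,1\}$. The $s=0$ contribution equals $\sum_v \E_u({\sf Fac}_1(\tau_v^{\geq 1}))\,\pi_v(1) = \sum_v \E_u(\tau_v^{\geq 1})\,\pi_v$; the $s=1$ contribution equals $\sum_v \E_u({\sf Fac}_0(\tau_v^{\geq 1}))\cdot \pi_v^{(1)}(1) = \sum_v \pi_v^{(1)}(1)$ since ${\sf Fac}_0 \equiv 1$. Summing yields the displayed formula for $\bK^{\geq 1,(1)}(1)$. Kemeny's constancy then follows at once: the left-hand side is $u$-independent by \Cref{cor:gen}, the summand $\sum_v \pi_v^{(1)}(1)$ is trivially so, and therefore ${\bf Q}^{\geq 1}(u) = \sum_v \E_u(\tau_v^{\geq 1})\,\pi_v$ must be $u$-independent as well.

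The proof is essentially a bookkeeping exercise; the only point that needs any care — interchanging differentiation with the power series defining each $\G_{u,v}^{\geq 1}$ and with the finite sum over $v$ — is handled for free by the analyticity of $\bK_u^{\geq 1}$ on a neighborhood of $1$, itself a consequence of the strict inequality $R_1>1$ established in the introduction. No genuine obstacle arises.
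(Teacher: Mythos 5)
Your proof is correct and follows the same route as the paper: differentiate \eqref{eq:JF} $n$ times by Leibniz's rule, evaluate at $x=1$, identify the derivatives of $\G_{u,v}^{\geq 1}$ at $1$ with factorial moments via \eqref{eq:fac}, truncate using $\deg \pi_v \leq |V|-1$, and specialize to $n=1$. You additionally spell out the justification for term-by-term differentiation (normal convergence on $\bar B(0,R_1)$ with $R_1>1$), which the paper leaves implicit but which follows from its setup.
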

Since $\bK^{\geq 1}(x)$ is a polynomial of degree $|V|$, for $n\geq |V|+1$
  the left hand side of \eref{eq:qgt} becomes zero, so that the ``$s=0$ term'' in the right hand side is the opposite of the sum of the others. For  $n\geq |V|+1$,
  \[ \sum_{v\in V} \frac{\E_u\l({\sf Fac}_{n}(\tau^{\geq 1}_v)\r)}{n!}  \;  \pi^{(0)}_v(1) =\bZ\, \E_u\l(\frac{{\sf Fac}_{n}(\tau^{\geq 1}_X)}{n!}\r)=-
    \sum_{s=1}^{ |V|-1}  \sum_{v\in V} \frac{\E_u\l({\sf Fac}_{n-s}(\tau^{\geq 1}_v)\r)}{(n-s)!}  \;  \frac{\pi^{(s)}_v(1)}{s!},\]
  and again, this does not depend on $u$.

  \begin{rem}[About the variance of $\tau_X^{\geq 1}$]
 Take $n=2$ in \Cref{cor:1}. For $|V|\geq 3$, we get that 
      \ben\label{eq:efdqs}
       \bK^{\geq 1,(2)}(1)/2!&=&\frac{\bZ}2\,\E_u\l(\tau_X^{\geq 1}\l(\tau_X^{\geq 1}-1\r)\r)+\sum_{v\in V}\E_u\l(\tau_v^{\geq 1} \r)\pi_v^{(1)}(1)+ \sum_{v\in V}\frac{\pi_v^{(2)}(1)}{2}
       \een
       is independent of $u$. Since $\E_u(\tau^{\geq 1}_X)$, is independent of $u$ too, we preserve the independence with respect to $u$ of the right hand side of \eref{eq:efdqs} by adding $\frac{\bZ}2\l(\E_u(\tau^{\geq 1}_X)-\E_u(\tau^{\geq 1}_X)^2\r)-\sum_{v\in V} \frac{\pi_v^{(2)}(1)}{2}$. Therefore, there exists a constant $\mathfrak{C}$ (depending on $\M$ only) such that 
\[\mathfrak{C}:=\frac{\bZ}2\,{\sf Var}_u(\tau^{\geq 1}_X)+ \sum_{v\in V}\E_u\l(\tau_v^{\geq 1} \r)\pi_v^{(1)}(1) \]       
In general (and this can be checked on examples), $\sum_{v\in V}\E_u\l(\tau_v^{\geq 1} \r)\pi_v^{(1)}(1)$ depends on $u$, so that, ${\sf Var}_u(\tau^{\geq 1}_X)$ depends on $u$ too.
Nevertheless, we have an identity
\[\max_{u\in V} {\sf Var}_u(\tau^{\geq 1}_X) - \min_{u \in V}  {\sf Var}_{u}(\tau^{\geq 1}_X) =\frac{2}{\bZ}\l[ \max_u \sum_{v\in V}\E_u\l(\tau_v^{\geq 1} \r)\pi_v^{(1)}(1)-\min_{u\in V} \sum_{v\in V}\E_u\l(\tau_v^{\geq 1} \r)\pi_v^{(1)}(1)\r].\]
\end{rem}

\subsection{Alternative algebraic representation of $\G_{u,v}^{\geq t}$ and consequences}

The generating functions $\G_{u,v}^{\geq t}(x)$ can be expressed using simple matrix operations (for $x\in \overbar{B}(0,R_t)$): letting  $\M^{\star v}$ be the matrix that coincides with $\M$ except for its $v$-th column which contains only zero values. For $t\geq 1$, we get
\[\G_{u,v}^{\geq t}(x)= \l[(x\M)^{t-1}\l(\sum_{k\geq 0} (x\M^{\star v})^{k}\r)(x\M)\r]_{u,v}=   \l[(x\M)^{t-1}(\Id_d -x\M^{\star v})^{-1}(x\M)\r]_{u,v},\]
since a path $p=(p_0,\dots,p_k)$ from $u$ to $v$ contributes to $\G_{u,v}^{\geq t}(x)$ if :\\
		$\bullet$ $k\geq t$,\\
		$\bullet$ it starts at $u$, meaning that $p_0=u$,\\
		$\bullet$ it avoids $v$ from time $t$ to $k-1$, i.e. $p_t,\cdots,p_{k-1}$ are different from $v$ and\\
		$\bullet$ it ends at $v$, this is $p_k=v$.\\
For $t=0$, there is a difference since
\ben\label{eq:Giojj} \G_{u,v}^{\geq 0}(x)=  \1_{u=v}+\1_{u\neq v} \l[(\Id_d-x\M^{\star v})^{-1}(x\M)\r]_{u,v}\een
and these identities are valid at least for $x\in (-1,1)$.
Taking into account that for $v=u$, 
$\l[(\Id_d-x\M^{\star v})^{-1}(x\M)\r]_{u,v}=\l[(\Id_d-x\M^{\star v})^{-1}(x\M)\r]_{v,v}$, which represents the generating function of paths of size at least one with first hitting time of $v$ greater than 1, that is, it is $\E_v(x^{\tau^{\geq 1}_v})$. 
we get
\ben\label{eq:Giojjj} \G_{u,v}^{\geq 0}(x)&=&  \1_{u=v}\l(1-\E_v\l(x^{\tau^{\geq 1}_v}\r)\r) + \l[(\Id_d-x\M^{\star v})^{-1}(x\M)\r]_{u,v}\\
&=&  \1_{u=v}\l(1-\E_v\l(x^{\tau^{\geq 1}_v}\r)\r) + \G_{u,v}^{\geq 1}(x).
\een
Multiplying by $\pi_v(x)$ and summing over $v$ for fixed $u$ gives,
\[\bK^{\geq 0}_u(x)= \pi_u(x)\l(1-\E_u\l(x^{\tau^{\geq 1}_u}\r)\r) +   \bK^{\geq 1}_u(x).\]
Now, we can divide both size by $\bK^{\geq 0}_u(x)$ (and recall that $\bK^{\geq 1}_u(x)=x\bK^{\geq 0}_u(x)$) and get the relation
\begin{align}\label{invx}
1-\E_u\l(x^{\tau^{\geq 1}_u}\r)=(1-x)\frac{\bK^{\geq 0}(x)}{\pi_u(x)} = \frac{\det(\Id_d -x\M)}{\pi_u(x)}=\frac{1}{(\Id_d-x\M)^{-1}_{u,u}}
\end{align}
so that
\begin{cor}\label{cor:fqqqf} For all $u \in V$ and $x\in B(0,R_{1})$
  \[\E_u\l(x^{\tau^{\geq 1}_u}\r)=1-\frac{1}{(\Id_d-x\M)^{-1}_{u,u}},\]
  moreover, for all $u,v \in V$
  \[\l[(\Id_d - x\M)^{-1}\r]_{u,v}= {\E_u\l(x^{\tau^{\geq 0}_v}\r)}/\l(1-\E_v\l(x^{\tau^{\geq 1}_v}\r)\r).\]
  \end{cor}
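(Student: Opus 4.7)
The plan is to read off both identities as essentially cosmetic reformulations of material already collected in the preceding subsection, then worry about domains of validity.

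For the first identity, I start from equation~\eqref{invx}, which was obtained in the paragraph above the statement of the corollary and reads
\[
1-\E_u\!\l(x^{\tau^{\geq 1}_u}\r)=\frac{\det(\Id_d-x\M)}{\pi_u(x)}
\]
as an identity of power series on $B(0,1)$. The only remaining input is the Laplace cofactor formula for the inverse of a matrix, which gives $[(\Id_d-x\M)^{-1}]_{u,u}=\pi_u(x)/\det(\Id_d-x\M)$ whenever $\det(\Id_d-x\M)\neq 0$; plugging this in yields exactly $1-\E_u(x^{\tau^{\geq 1}_u})=1/[(\Id_d-x\M)^{-1}]_{u,u}$.

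For the second identity, I would then use \Cref{lem:essential}, which states
\[
\G_{u,v}^{\geq 0}(x)\,\frac{\pi_v(x)}{\det(\Id_d-x\M)}=\l[(\Id_d-x\M)^{-1}\r]_{u,v}
\]
on $B(0,1)$. Applying the first identity at the state $v$ replaces the quotient $\pi_v(x)/\det(\Id_d-x\M)$ by $1/(1-\E_v(x^{\tau^{\geq 1}_v}))$, and then rewriting $\G_{u,v}^{\geq 0}(x)=\E_u(x^{\tau^{\geq 0}_v})$ gives the desired closed form for $[(\Id_d-x\M)^{-1}]_{u,v}$.

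The only genuine issue is domain of validity: the ingredients above live on $B(0,1)$, while the statement asks for $B(0,R_1)$ (with $R_1>1$). I would handle this by analytic continuation, exactly as in the deduction of \Cref{theo:laK} from \Cref{lem:essential}. Both sides of each identity are meromorphic on a neighbourhood of $\overline{B}(0,R_1)$: the left hand sides because $\G_{u,u}^{\geq 1}$ and $\G_{u,v}^{\geq 0}$ converge normally on $\overline{B}(0,R_1)$ by the footnote argument given in the introduction, and the right hand sides because they are rational in $x$ with denominators that are nonzero at $x=0$. Agreement on a neighbourhood of the origin then forces agreement on the full disk $B(0,R_1)$ by the identity principle. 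The main (minor) obstacle is checking that the denominators involved do not vanish on the domain where one wants the formula to be interpreted pointwise, but since I only need the identity of meromorphic functions this is not actually required.
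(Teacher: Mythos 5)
Your proposal is correct and follows essentially the same route as the paper: the first identity is just a re-reading of equation \eqref{invx} (whose last equality is precisely the Laplace cofactor formula you invoke), and the second identity is obtained by combining the decomposition $[(\Id_d-x\M)^{-1}]_{u,v}=G_{u,v}^{\geq 0}(x)\,[(\Id_d-x\M)^{-1}]_{v,v}$ (equation \eqref{eq:decMm1}, of which \Cref{lem:essential} is the cofactor rewriting you use) with the first identity applied at $v$. Your closing remark on analytic continuation to $B(0,R_1)$ is a reasonable and adequate handling of the domain, which the paper leaves implicit.
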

  \begin{proof}  The first statement is a consequence of the previous discussion, while for the second, we start from 
    \eqref{eq:decMm1}, use $G_{u,v}^{\geq 0}(x)=\E_u\l(x^{\tau^{\geq 0}_v}\r)$ and from the first statement of the present corollary.
  \end{proof} 
  Observe that since $\M$ is an irreducible transition matrix, for $x\in(0,1)$,
  \begin{align}\label{MTx}
  	N_x&:=\sum_{k\geq 0} (1-x) x^k M^k=(1-x)(\Id_d-xM)^{-1}
  \end{align} 
  is also an irreducible transition matrix, and then the sum of the coefficient on each row is one. Hence, \Cref{cor:fqqqf} yields to  
  \begin{cor}\label{cor:constancy} For all $u\in V$ and $x\in B(0,1)$,
    $$\sum_{v\in V} {\E_u\l(x^{\tau^{\geq 0}_v}\r)}/\l({1-\E_v\l(x^{\tau^{\geq 1}_v}\r)}\r)={1}/({1-x}).$$ 
  \end{cor}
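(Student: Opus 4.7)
The plan is to reduce the claim to the row-sum identity for the matrix $N_x$ defined in \eqref{MTx}, and then to recognize each row entry via the second identity of \Cref{cor:fqqqf}. More precisely, I would proceed in three short steps.

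First, fix $u\in V$ and $x\in B(0,1)$. Since $\M$ is stochastic, each power $\M^k$ has row sums equal to $1$, and absolute convergence of $\sum_{k\geq 0} x^k \M^k$ on $B(0,1)$ (coefficientwise bounded by $\sum_k |x|^k$) allows to interchange the finite sum over $v$ with the infinite sum over $k$. This yields
\[
\sum_{v\in V} \l[(\Id_d-x\M)^{-1}\r]_{u,v}
= \sum_{k\geq 0} x^k \sum_{v\in V}[\M^k]_{u,v} = \sum_{k\geq 0} x^k = \frac{1}{1-x}.
\]
(Equivalently, the matrix $N_x=(1-x)(\Id_d-x\M)^{-1}$ has row sums equal to $1$; the identity above is just that fact divided by $1-x$.)

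Second, I would invoke the second identity of \Cref{cor:fqqqf}, which gives, for each $v\in V$ and $x\in B(0,R_1)\cap B(0,1)=B(0,1)$,
\[
\l[(\Id_d-x\M)^{-1}\r]_{u,v} = \frac{\E_u\l(x^{\tau^{\geq 0}_v}\r)}{1-\E_v\l(x^{\tau^{\geq 1}_v}\r)}.
\]
Substituting this into the row-sum identity from the first step gives exactly the desired equality
\[
\sum_{v\in V} \frac{\E_u\l(x^{\tau^{\geq 0}_v}\r)}{1-\E_v\l(x^{\tau^{\geq 1}_v}\r)} = \frac{1}{1-x}.
\]

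I do not expect a serious obstacle here: the only point that deserves care is to justify that the denominator $1-\E_v(x^{\tau^{\geq 1}_v})$ does not vanish on $B(0,1)$, which follows from \Cref{cor:fqqqf} since this denominator equals $1/[(\Id_d-x\M)^{-1}]_{v,v} = \det(\Id_d-x\M)/\pi_v(x)$, and $\det(\Id_d-x\M)$ vanishes only at eigenvalues of $\M^{-1}$, all of modulus at least $1$. Thus the right-hand side of the second step is a well-defined meromorphic (in fact holomorphic) function on $B(0,1)$, and the swap of sums in the first step is unconditional by the Fubini-Tonelli-type argument on non-negative coefficients extended to $B(0,1)$ by absolute convergence.
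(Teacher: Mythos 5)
Your proof is correct and follows essentially the same route as the paper: the authors observe that $N_x=(1-x)(\Id_d-x\M)^{-1}$ is a stochastic matrix for $x\in(0,1)$ (so its row sums are $1$) and then invoke the second identity of \Cref{cor:fqqqf}, which is precisely your two-step reduction. If anything, your version is marginally cleaner, since you extract the row-sum identity $\sum_v[(\Id_d-x\M)^{-1}]_{u,v}=1/(1-x)$ directly from the power-series expansion, which covers all of $B(0,1)$ without needing the ``$N_x$ is stochastic for real $x\in(0,1)$'' detour and an implicit analytic continuation. One minor phrasing slip: you write that $\det(\Id_d-x\M)$ vanishes only at ``eigenvalues of $\M^{-1}$''; what you mean (and what is true) is that it vanishes only at reciprocals of nonzero eigenvalues of $\M$, which all have modulus $\geq 1$, so the determinant is nonvanishing on $B(0,1)$.
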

  \begin{rem}
  	Corollary \ref{cor:constancy}, states also a constancy result, since the right hand side is independent of $u\in V$.
  \end{rem}
  \begin{rem}
  	The matrix defined on equation \eqref{MTx} is of independent interest :  in the irreducible case when taking the limit at $x=1$ it gives the invariant measure matrix whose rows are equal to the invariant measure. This matrix is intimately related with the group inverse (see \cite{M75}) and the fundamental matrix (see \cite{KS76}); both matrices give access to the matrix whose coefficients are the expectations of first passage times indexed by initial and target points, among others (see \cite{M75,AFH02}). 
  \end{rem}

 \begin{cor}\label{cor:der:det}
	\[
		\frac{d}{dx}\det(\Id_d-x\M) = -\frac{1}{x}\sum_{v\in V}\E_v\l(x^{\tau_v^{\geq 1}}\r)\pi_v(x)
	\]
\end{cor}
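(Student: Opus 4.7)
The plan is to reduce the statement to a purely algebraic identity involving only $\det(\Id_d - x\M)$ and the polynomials $(\pi_v(x))_{v\in V}$, and then establish that identity via Jacobi's formula for the derivative of a determinant. First I would invoke \Cref{cor:fqqqf}: it yields $\E_v(x^{\tau_v^{\geq 1}}) = 1 - \det(\Id_d - x\M)/\pi_v(x)$, so that after multiplying by $\pi_v(x)$ and summing over $v \in V$, one gets
\[
\sum_{v \in V}\E_v\l(x^{\tau_v^{\geq 1}}\r)\pi_v(x) = \sum_{v \in V}\pi_v(x) - |V|\det(\Id_d - x\M).
\]
The original corollary is therefore equivalent to the matrix identity
\[
\sum_{v \in V}\pi_v(x) - |V|\det(\Id_d - x\M) = -x\,\frac{d}{dx}\det(\Id_d - x\M),
\]
which no longer refers to the Markov chain at all.

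To prove this matrix identity I would combine two classical facts. Jacobi's formula applied to $A(x) = \Id_d - x\M$ gives $\frac{d}{dx}\det(\Id_d - x\M) = \Tr\l(\Adj(\Id_d - x\M)\cdot(-\M)\r) = -\Tr\l(\Adj(\Id_d - x\M)\,\M\r)$. On the other hand, the defining property $\Adj(A)\,A = \det(A)\Id_d$, with $A = \Id_d - x\M$, can be rewritten as
\[
\Adj(\Id_d - x\M) = \det(\Id_d - x\M)\,\Id_d + x\,\Adj(\Id_d - x\M)\,\M.
\]
Taking traces on both sides, and using that the diagonal entries of the adjugate are exactly the principal minors $[\Adj(\Id_d - x\M)]_{v,v} = \pi_v(x)$, I would obtain $\sum_{v\in V}\pi_v(x) = |V|\det(\Id_d - x\M) + x\,\Tr\l(\Adj(\Id_d - x\M)\,\M\r)$. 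Substituting Jacobi's formula for the trace on the right produces precisely the required identity, and assembling this with the reduction above gives the corollary.

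There is no real obstacle in this approach: the computation is a short chain of algebraic manipulations, and the only conceptual point is recognizing that the ``extra'' term $x\,\Tr(\Adj(\Id_d - x\M)\M)$ appearing when one expands the adjugate identity along $A = \Id_d - x\M$ is exactly what Jacobi's formula evaluates, up to a sign. (Alternatively, one could avoid Jacobi's formula entirely by expanding $\det(\Id_d - x\M) = \sum_{S\subseteq V}(-x)^{|S|}\det(\M_S)$ in principal minors, writing the same expansion for $\pi_v(x)$ with $v \notin S$, and matching coefficients of $x^k$ on both sides; this is more elementary but slightly heavier in bookkeeping.)
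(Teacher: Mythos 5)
Your proposal is correct and takes essentially the same route as the paper: both proofs combine Jacobi's formula $\frac{d}{dx}\det(\Id_d-x\M)=-\Tr\l(\Adj(\Id_d-x\M)\M\r)$, the adjugate identity $\Adj(A)A=\det(A)\Id_d$ applied to $A=\Id_d-x\M$, the observation that the diagonal of the adjugate consists of the principal minors $\pi_v(x)$, and the factorization $\det(\Id_d-x\M)=\l(1-\E_v\l(x^{\tau_v^{\geq 1}}\r)\r)\pi_v(x)$ from \Cref{cor:fqqqf}. The only difference is organizational --- you first clear the probabilistic factor via \Cref{cor:fqqqf} and then prove the residual algebraic identity, whereas the paper transforms the left-hand side step by step and substitutes \eqref{invx} at the end --- but the underlying chain of identities is identical.
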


\begin{proof}
	By the Jacobi formula for the derivative of a determinant one has
	\be 
	\frac{d}{dx}\det(\Id_d-x\M) 
	&=& \Tr(\Adj(\Id_d-x\M)(-\M))\\
	&=& \frac{1}{x}\Tr\big(\Adj(\Id_d-x\M)(\Id_d-x\M) - \Adj(\Id_d-x\M)\big)\\
	&=& \frac{1}{x}\Tr\big(\det(\Id_d-x\M)\Id - \Adj(\Id_d-x\M)\big)\\
	&=& \frac{1}{x}\sum_{v\in V}\left(\det(\Id_d-x\M) - \pi_v(x)\right)\\
	&=& -\frac{1}{x}\sum_{v\in V}\E_v\l(x^{\tau_v^{\geq 1}}\r)\pi_v(x),
 	\ee
	where in the last equality we use equation \eqref{invx} to obtain that $\det(\Id_d-x\M) = \l(1-\E_v\l(x^{\tau_v^{\geq 1}}\r)\r)\pi_v(x)$.
\end{proof}

We may deduce from here a very famous result: 
\begin{cor}\label{cor:hsg} For all $v\in V$,
  \[		\rho_v=\frac{\pi_v}{\sum_u\pi_u} = \frac{1}{\E_v(\tau_v^{\geq 1})}.	\]
\end{cor}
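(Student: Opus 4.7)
The plan is to derive the identity directly by differentiating the functional equation \eqref{invx} at $x=1$. First, I would rewrite \eqref{invx} after clearing denominators as
\[
\pi_u(x)\,\bigl(1-\E_u(x^{\tau_u^{\geq 1}})\bigr) = \det(\Id_d-x\M),
\]
an equality of analytic functions on an open neighbourhood of $[0,1]$ (the right side is a polynomial, and the left side is analytic on $B(0,R_1)$). At $x=1$ both sides vanish: the right side because $1$ is an eigenvalue of $\M$, and the left side because $\E_u(1^{\tau_u^{\geq 1}})=1$ (which uses that $\tau_u^{\geq 1}$ is a.s.\ finite by irreducibility of the finite-state chain).

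Next, I would differentiate both sides at $x=1$. On the left, the product rule gives
\[
\pi_u'(1)\,\bigl(1-\E_u(1^{\tau_u^{\geq 1}})\bigr) + \pi_u(1)\cdot\bigl(-\E_u(\tau_u^{\geq 1})\bigr) = -\pi_u\,\E_u(\tau_u^{\geq 1}),
\]
where the first term disappears and we recognize $\pi_u(1)=\pi_u$ and $\frac{d}{dx}\E_u(x^{\tau_u^{\geq 1}})\big|_{x=1}=\E_u(\tau_u^{\geq 1})$, which is finite thanks to the footnote on radii of convergence $R_{u,v,t}>1$ preceding Theorem \ref{theo:laK}. On the right, Corollary \ref{cor:der:det} evaluated at $x=1$ gives
\[
\frac{d}{dx}\det(\Id_d-x\M)\Big|_{x=1} = -\sum_{v\in V}\E_v(1^{\tau_v^{\geq 1}})\,\pi_v(1) = -\sum_{v\in V}\pi_v = -\bZ.
\]
Equating both sides yields $\pi_u\,\E_u(\tau_u^{\geq 1})=\bZ$, i.e.\ $\E_u(\tau_u^{\geq 1}) = \bZ/\pi_u = 1/\rho_u$, which is the desired identity.

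I do not anticipate a real obstacle here: the argument is essentially a one-line differentiation once \eqref{invx} and Corollary \ref{cor:der:det} are in hand. The only mild subtlety is justifying that the probability generating function $\E_u(x^{\tau_u^{\geq 1}})$ admits a derivative at $x=1$ equal to the expectation $\E_u(\tau_u^{\geq 1})$; this follows from the fact noted earlier that its radius of convergence strictly exceeds $1$, so differentiation term by term is legitimate on a neighbourhood of $x=1$ and no dominated-convergence argument is needed.
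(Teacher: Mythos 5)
Your proposal is correct and follows essentially the same route as the paper: both start from the identity $\pi_u(x)\bigl(1-\E_u(x^{\tau_u^{\geq 1}})\bigr)=\det(\Id_d-x\M)$ from \eqref{invx}, differentiate at $x=1$, and compare with the value $-\bZ$ of $\frac{d}{dx}\det(\Id_d-x\M)$ at $x=1$ obtained from Corollary \ref{cor:der:det}. The justification via $R_{u,v,1}>1$ for differentiating the probability generating function at $x=1$ is exactly the right remark, which the paper leaves implicit.
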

 
\begin{proof} 
  First, we use \Cref{cor:der:det} and evaluate at $x=1$. We get
  \ben\label{eq:ytutp1}	 
  \frac{d}{d x}\det(\Id_d-x\M)~\Big|_{x=1} =  -\frac{1}{x} \sum_{v\in V}\E_v\left(x ^{\tau_v^{\geq 1}}\right)\pi_v(x)~\Big|_{x=1}= -\sum_{v\in V} \pi_v
  \een
  Second, we take the derivative of $\det(\Id_d-x\M) = \l(1-\E_v\l(x^{\tau_v^{\geq 1}}\r)\r)\pi_v(x)$ (the second equality in equation \eqref{invx}) and evaluate at $x=1$ to obtain 
	\begin{align}\label{eq:ytutp2} 
      \frac{d}{d x}\det(\Id_d-x\M)~\Big|_{x=1}=\l(-\E_u(\tau_u^{\geq 1} x^{\tau_u^{\geq 1} -1})\pi_u(x) + (1-\E_u(x^{\tau_u^{\geq 1}}))\pi_u'(x)\r)\Big|_{x=1}= -\E_u(\tau_u^{\geq 1})\pi_u
	\end{align}
	which implies the result.
\end{proof}

  \subsection{On the nature of $\bK^{\geq t}_u(x)$}

In general, the sign of $\pi_v(x)$, as a polynomial in a real variable $x$, depend of $x$ and of $v$. In general, the signs of $\pi_v(x)$ and of $\pi_{v'}(x)$, for $v\neq v'$, are not the same for all $x\in \R$ (see an example in \Cref{sec:ae}). However, for $x\in(0,1)$, the $\pi_v(x)$ are non negative, and then $\bK^{\geq 0}(x)$ has a probabilistic interpretation:
\begin{pro}\label{pro:qdt}
  \bir
  \itr For every $x\in (0,1)$ 
  the vector  $(\pi_v(x), v\in V)$ has positive coordinates, so that it is proportional to a probability measure $\mu_x$ on $V$.
  \itr Hence, for $X_x\sim \mu_x$ independent of the Markov chain $C$, the values of $\l(\E_u\l(a^{\tau_{X_a}^{\geq t}}\r),u\in V\r)$ are all the same.
  \eir
\end{pro}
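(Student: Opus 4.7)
The plan is to establish (i) via a spectral-radius argument on $\M^{(v)}$, then deduce (ii) as a direct consequence of \Cref{cor:gen}.

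For part (i), I would exploit that $\M^{(v)}$ is non-negative and substochastic (its row sums are $\leq 1$, being obtained from a stochastic matrix by erasing a row and column). Because $\M$ is irreducible, from every state $u \neq v$ there is a directed path reaching $v$ in the underlying graph, so under the sub-Markov dynamics governed by $\M^{(v)}$ mass must eventually leak out of $V \setminus \{v\}$; equivalently, by a standard finite-state argument, the spectral radius $r_v$ of $\M^{(v)}$ is strictly less than $1$. Consequently, for every $x \in [0,1]$ the Neumann series $\sum_{k \geq 0}(x\M^{(v)})^k$ converges, $\Id_{d-1} - x\M^{(v)}$ is invertible on $[0,1]$, and thus $\pi_v(x) = \det(\Id_{d-1} - x\M^{(v)})$ does not vanish there. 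Since $\pi_v$ is a polynomial (hence continuous) with $\pi_v(0) = 1$, this forces $\pi_v(x) > 0$ for all $x \in [0,1]$. Setting ${\bf Z}_x := \sum_{v \in V}\pi_v(x) > 0$ then defines the probability measure $\mu_x$ on $V$ via $\mu_x(v) = \pi_v(x)/{\bf Z}_x$.

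For part (ii), I would fix $a \in (0,1)$ and let $X_a \sim \mu_a$ be independent of $C$. Noting that $a \in B(0,R_t)$ since $R_t > 1$, \Cref{cor:gen} gives
$$\sum_{v \in V}\E_u\!\left(a^{\tau_v^{\geq t}}\right)\pi_v(a) \;=\; \bK_u^{\geq t}(a) \;=\; a^t\,\bK^{\geq 0}(a),$$
a value that is independent of $u \in V$. Dividing by ${\bf Z}_a$ yields
$$\E_u\!\left(a^{\tau_{X_a}^{\geq t}}\right) \;=\; \sum_{v\in V} \E_u\!\left(a^{\tau_v^{\geq t}}\right) \mu_a(v) \;=\; \frac{a^t\,\bK^{\geq 0}(a)}{{\bf Z}_a},$$
independent of $u$, as claimed.

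The only genuinely non-trivial step is the strict inequality $r_v < 1$ invoked in (i); this is where the irreducibility of $\M$ is essentially used (without it, one could only conclude $r_v \leq 1$, which would not exclude $\pi_v$ from vanishing on $(0,1)$). Everything else is a direct invocation of \Cref{cor:gen} followed by a normalization.
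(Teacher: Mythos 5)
Your proof is correct, and part (ii) is identical to the paper's, but for part (i) you take a genuinely different route. The paper avoids spectral-radius considerations entirely: for fixed $v$ and $x\in(0,1)$ it forms the stochastic irreducible matrix $\widetilde{\M}(v,x)=x\M+(1-x)C(v)$, where $C(v)$ has all ones in the $v$-column and zeros elsewhere; since deleting row and column $v$ erases the added column, $\widetilde{\M}(v,x)^{(v)}=x\M^{(v)}$, so the paper's own determinantal formula for invariant vectors exhibits $\pi_v(x)=\det\bigl(\Id_{d-1}-\widetilde{\M}(v,x)^{(v)}\bigr)$ as a coordinate of the invariant measure of an irreducible chain, hence positive. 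This elegantly recycles the paper's central identity. Your argument, via substochasticity of $\M^{(v)}$, a spectral-radius bound, and continuity from $\pi_v(0)=1$, is a valid and more classical alternative that requires no auxiliary chain. One small inaccuracy in your closing remark: you claim that without $r_v<1$ (i.e.\ without irreducibility) one could not exclude $\pi_v$ from vanishing on $(0,1)$. In fact the crude bound $r_v\leq 1$, which holds for any substochastic matrix, already suffices on the \emph{open} interval: a zero of $\pi_v$ at $x_0\in(0,1)$ would force $\M^{(v)}$ to have an eigenvalue $1/x_0>1$. The strict inequality $r_v<1$ is needed only to push positivity to the endpoint $x=1$, which the proposition does not require; this does not affect the validity of your proof, only the attribution of where irreducibility is used.
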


The second assertion is a consequence of \Cref{cor:gen}.

\begin{proof} Proof of $(i)$.
  Fix $v\in V$, and take $x\in(0,1)$. We will show that $\pi_v(x)>0$.  
  Consider the matrix $\widetilde{\M}(v)$ defined by
  \[\widetilde{\M}(v,x) = x \M + (1-x) C(v)\]
  where $C(v)$ is the matrix indexed by $V$, with zero coefficients except on the column corresponding to the state $v$, which contains ones.
  \color{black}
  It is easy to see that $\widetilde{\M}$ is a stochastic irreducible matrix,
  and then
  \[\tilde{\pi}(v):=\bma \det(\Id_{d-1}-\widetilde{\M}(v,x)^{(u)}) , u \in V \ema\] is proportional to the invariant measure of $\widetilde{\M}(v,x)$, and then all its coordinates are positive. 
  The coefficient $\tilde{\pi}_v(v)$ corresponding to the vertex $v$, is
  \be \tilde{\pi}_v(v)&=&\det(\Id_{d-1}-\widetilde{\M}{(v,x)}^{(v)})\\
  &=&\det(\Id_{d-1}-x\M^{(v)})=\pi_v(x),\ee therefore $\pi_v(x)$ is positive.\\
  We have proved that for all $v\in V$ the values of $\pi_v(x)$ are positive and therefore there exists a probability distribution $\mu_x$ proportional to $(\pi_v(x),v\in V)$.\\
  Proof of $(ii)$:
Since \[\bK^{\geq t}(x)=\sum_v \E_u\l(x^{\tau_v^{\geq t}}\r)\det(\Id_{d-1}-x\M^{(v)})= \E_u(x^{\tau_{X_x}^{\geq t}}) Z(x),\] where $Z(x)=\sum_{u\in V}\det(\Id_{d-1}-x\M^{(u)})$, the conclusion follows.
\end{proof} 

\subsection{Eigenvalues considerations}\label{eigen}

We will extract some information from the characteristic polynomial formula
$\det(t \Id-\M)=\prod_{i=1}^{|V|} (t - \lambda_i)$ 
where $\lambda_0=1, \lambda_1,\cdots,\lambda_{d-1}$  are the eigenvalues of $\M$.  Hence, for all $x\neq 0$, by taking $t=1/x$
\be
 \det(\Id_d - x\M)=x^d \det(\Id_d/x-\M)=\prod_{i=0}^{d-1} (1 - x\lambda_i).
\ee
Since $\M$ is irreducible, it admits 1 as eigenvalue with multiplicity one, and then
\[\bK^{\geq 0}(x)=\prod_{i=1}^{d-1} (1 - x\lambda_i).\] 
We immediately get 
\begin{align}\label{normalisation}
\prod_{i=1}^{d-1} (1 - \lambda_i)  &= \bK^{\geq 0}(1) = \sum_{v\in V}\pi_v = \bZ.
\end{align}
The following proposition is well known, but the standard proof uses the group inverse \cite{CKN10}. We propose here a proof that avoids this object.

\begin{pro}\label{prop:eigKem}
	The Kemeny constant is  
	\[
		\frac{\bQ^{\geq 1}}{\bZ} = 1 + \sum_{i=1}^{d-1}\frac{1}{1-\lambda_i}
	\]
\end{pro}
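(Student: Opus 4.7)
The plan is to apply \Cref{cor:1} with $n=1$, which furnishes the identity
\[
\bK^{\geq 1,(1)}(1) = \bQ^{\geq 1} + \sum_{v\in V} \pi_v^{(1)}(1),
\]
and then to evaluate each of the two quantities on the right side independently using the eigenvalue factorization $\bK^{\geq 0}(x) = \prod_{i=1}^{d-1}(1-x\lambda_i)$ already recorded in \eqref{normalisation}.

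For the first term, starting from $\bK^{\geq 1}(x) = x\bK^{\geq 0}(x)$, logarithmic differentiation of $\bK^{\geq 0}$ gives $\bK^{\geq 0,(1)}(1)/\bK^{\geq 0}(1) = -\sum_{i=1}^{d-1}\lambda_i/(1-\lambda_i)$. Using $\bK^{\geq 0}(1)=\bZ$ and the rewriting $-\lambda_i/(1-\lambda_i) = 1 - 1/(1-\lambda_i)$, one obtains
\[
\bK^{\geq 1,(1)}(1) = \bZ + \bK^{\geq 0,(1)}(1) = \bZ\Bigl(d - \sum_{i=1}^{d-1}\tfrac{1}{1-\lambda_i}\Bigr).
\]

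For the second term, multiply \Cref{cor:der:det} by $x$ to obtain $x\,\phi'(x) = -\sum_v \E_v(x^{\tau_v^{\geq 1}})\pi_v(x)$, where $\phi(x) := \det(\Id_d - x\M) = \prod_{i=0}^{d-1}(1-x\lambda_i)$ with $\lambda_0=1$. Differentiating this identity once more and evaluating at $x=1$, the right side becomes $-\sum_v \E_v(\tau_v^{\geq 1})\pi_v - \sum_v \pi_v^{(1)}(1) = -d\bZ - \sum_v \pi_v^{(1)}(1)$, where \Cref{cor:hsg} yields $\E_v(\tau_v^{\geq 1})\pi_v = \bZ$ for every $v\in V$; the left side equals $\phi'(1) + \phi''(1)$, with $\phi'(1) = -\bZ$ already known from \eqref{eq:ytutp1}. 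A direct computation of $\phi''(1)$ from the product formula—keeping only the pair-products $\lambda_j\lambda_k\prod_{i\neq j,k}(1-\lambda_i)$ for which $\{j,k\}$ contains the Perron index $0$, all others vanishing because of the residual factor $1-\lambda_0=0$—yields $\phi''(1) = 2\bZ\bigl(\sum_{i=1}^{d-1}\tfrac{1}{1-\lambda_i} - (d-1)\bigr)$. Combining gives $\sum_v \pi_v^{(1)}(1) = \bZ(d-1) - 2\bZ\sum_{i=1}^{d-1}\tfrac{1}{1-\lambda_i}$.

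Subtracting the two expressions produces $\bQ^{\geq 1} = \bZ\bigl(1 + \sum_{i=1}^{d-1}\tfrac{1}{1-\lambda_i}\bigr)$, and dividing by $\bZ$ concludes. The main obstacle is the second-derivative computation of $\phi$ at $x=1$: identifying exactly which terms of the Leibniz expansion of $(\prod_i(1-x\lambda_i))''$ survive at the Perron eigenvalue is the only genuinely combinatorial step, and all the other ingredients—the eigenvalue factorization of $\bK^{\geq 0}$, the Jacobi-type identity of \Cref{cor:der:det}, and the stationary formula $\rho_v = 1/\E_v(\tau_v^{\geq 1})$—are already available in the paper.
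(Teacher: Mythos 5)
Your proof is correct and follows essentially the same route as the paper: both start from the $n=1$ case of \Cref{cor:1}, invoke the eigenvalue factorization $\bK^{\geq 0}(x)=\prod_{i=1}^{d-1}(1-x\lambda_i)$, and extract $\sum_{v}\pi_v^{(1)}(1)$ from the second derivative of $\det(\Id_d-x\M)$ at $x=1$ via \Cref{cor:der:det} and \Cref{cor:hsg}. The paper merely packages the same computation through the auxiliary logarithmic derivative $A(x)=\bK^{\geq 0,(1)}(x)/\bK^{\geq 0}(x)$ and the separate \Cref{pro:secder}, whereas you compute $\phi''(1)$ directly from the Leibniz expansion of the product and identify the surviving Perron terms inline.
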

We start with a lemma
\begin{lem}\label{pro:secder}
	\[
	\frac{d^2}{dx^2}\det(\Id-x\M)\Big|_{x=1} = (1-d)\bZ-\sum_{v\in V}\pi_v^{(1)}(1)
	\]
\end{lem}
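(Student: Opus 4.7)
The plan is to derive a first-order differential-type identity for $P(x) := \det(\Id_d - x\M)$ from the material already established, then differentiate once more and evaluate at $x=1$.

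First I would combine Corollary \ref{cor:der:det} with the identity $\det(\Id_d - x\M) = (1 - \E_v(x^{\tau_v^{\geq 1}}))\pi_v(x)$ from \eqref{invx}. The latter gives $\E_v(x^{\tau_v^{\geq 1}})\pi_v(x) = \pi_v(x) - P(x)$ for every $v\in V$. Substituting into Corollary \ref{cor:der:det} yields
\[
xP'(x) \;=\; -\sum_{v\in V}\bigl(\pi_v(x) - P(x)\bigr) \;=\; dP(x) - \sum_{v\in V}\pi_v(x).
\]
This clean identity is the main bridge; the rest is differentiation.

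Differentiating both sides with respect to $x$ gives
\[
P'(x) + xP''(x) \;=\; dP'(x) - \sum_{v\in V}\pi_v'(x),
\]
hence
\[
xP''(x) \;=\; (d-1)P'(x) - \sum_{v\in V}\pi_v^{(1)}(x).
\]
Evaluating at $x=1$ and using equation \eqref{eq:ytutp1}, which provides $P'(1) = -\bZ$, gives
\[
P''(1) \;=\; (d-1)(-\bZ) - \sum_{v\in V}\pi_v^{(1)}(1) \;=\; (1-d)\bZ - \sum_{v\in V}\pi_v^{(1)}(1),
\]
as desired.

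There is no substantial obstacle: the only step requiring care is verifying that the factor $1/x$ coming from Corollary \ref{cor:der:det} is handled cleanly, which is why one multiplies through by $x$ before differentiating — this avoids introducing singular terms and makes the evaluation at $x=1$ immediate. Everything else is a one-line computation relying on previously established identities.
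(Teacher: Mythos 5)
Your proof is correct, and it takes a genuinely different (and arguably slicker) route than the paper's. The paper starts from Corollary~\ref{cor:der:det}, $P'(x) = -\frac{1}{x}\sum_{v}\E_v\bigl(x^{\tau_v^{\geq 1}}\bigr)\pi_v(x)$ where $P(x):=\det(\Id_d-x\M)$, and differentiates that sum directly via the product rule, producing three terms; it then evaluates at $x=1$ and invokes Corollary~\ref{cor:hsg} (namely $\E_v(\tau_v^{\geq 1})\pi_v=\bZ$, hence $\sum_v\E_v(\tau_v^{\geq 1})\pi_v=d\bZ$) to collapse the middle term. You instead use \eqref{invx} \emph{before} differentiating to replace $\E_v\bigl(x^{\tau_v^{\geq 1}}\bigr)\pi_v(x)$ by $\pi_v(x)-P(x)$, obtaining the clean polynomial identity $xP'(x)=dP(x)-\sum_v\pi_v(x)$, which then differentiates trivially. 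This saves the product-rule bookkeeping and removes the dependence on Corollary~\ref{cor:hsg}; in fact you could even skip the citation of \eqref{eq:ytutp1}, since your identity already yields $P'(1)=dP(1)-\sum_v\pi_v(1)=-\bZ$ because $P(1)=0$. It also exposes the identity $xP'(x)=dP(x)-\sum_v\pi_v(x)$ between polynomials as a by-product, which is of some independent interest. Both proofs are valid; yours is the more economical.
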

\begin{proof}
	From \Cref{cor:der:det} one gets that
	\be
	\frac{d^2}{dx^2}\det(\Id-x\M)\Big|_{x=1} 
	&=& \left(\frac{1}{x^2}\sum_{v\in V}\E_v\l(x^{\tau_v^{\geq 1}}\r)\pi_v(x) -\frac{1}{x}\sum_{v\in V}\E_v\l(\tau_v^{\geq 1} x^{\tau_v^{\geq 1}-1}\r)\pi_v(x) -\frac{1}{x}\sum_{v\in V}\E_v\l(x^{\tau_v^{\geq 1}}\r)\pi_v^{(1)}(x)\right)\Big|_{x=1}\\
	&=&\sum_{v\in V} \pi_v-\sum_{v\in V} \E_v\l(\tau_v^{\geq 1}\r)\pi_v - \sum_{v\in V}\pi_v^{(1)}(1)\\
	&=&(1-d)\bZ- \sum_{v\in V}\pi_v^{(1)}(1),
	\ee
	where in the last equality we applied Corollary \ref{cor:hsg}.
\end{proof}

\begin{proof}[Proof of Proposition \ref{prop:eigKem}]
	Consider equation \eqref{eq:efdqs}, which says that
	\[
	\bQ^{\geq 1}  = \bK^{\geq 1,(1)}(1) - \sum_{v\in V}\pi^{(1)}_v(1).
	\]
	We have from Theorem \ref{theo:laK} and Equation \ref{eq:imp} that $\bK^{\geq 1}(x) = x\bK^{\geq 0}(x)$.\\
	Setting $A(x) := \left(-\sum_{i=1}^{d-1}\frac{\lambda_i}{1-\lambda_ix}\right)$ it is easy to see verify that: 
	\begin{align}
		\bK^{\geq 1,(1)}(x) &= \left(1+xA(x)\right)\bK^{\geq 0}(x)\\
		\frac{d}{dx}\det(\Id-x\M) &= \frac{d}{dx}\left((1-x)\bK^{\geq 0}(x)\right) = ((1-x)A(x)-1)\bK^{\geq 0}(x)\\
		\frac{d^2}{dx^2}\det(\Id-x\M) & =  \left((1-x)\left(\frac{d}{dx}A(x)+A(x)^2\right)-2A(x)\right)\bK^{\geq 0}(x).
	\end{align}
	Gathering this, evaluating at $x=1$ and using Lemma \ref{pro:secder} we obtain
	\begin{align}
		\bQ^{\geq 1} 
		&= \left(1-A(1)\right)\bK^{\geq 0}(1) + (d-1)\bZ\\
		&= \left(1+\sum_{i=1}^{d-1}\frac{\lambda_i}{1-\lambda_i}\right)\bK^{\geq 0}(1)+(d-1)\bZ\\
		&= \left(2-d+\sum_{i=1}^{d-1}\frac{1}{1-\lambda_i}\right)\bZ+(d-1)\bZ,
	\end{align}
	where in the last equality we used equation \eqref{normalisation}. Dividing by $\bZ$, we conclude.
\end{proof}

\section{An example}
\label{sec:ae}
Consider the irreducible transition matrix
\[\M= \frac{1}{12}\left[ \begin {array}{cccc} 5&2&4&1\\ \noalign{\medskip}1&3&3&5
\\ \noalign{\medskip}1&6&4&1\\ \noalign{\medskip}2&1&6&3\end {array}
\right]\]
whose unique invariant probability distribution is
\[\rho = \frac{1}{1376} \left[ \begin {array}{cccc} 209&396&475&296\end {array} \right]. \]
The computation of $\pi_v(x)$ for $v$ from 1 to 4 gives the folowing result 
\[\left[ \begin {array}{cccc} 1-{\frac {5\,x}{6}}+{\frac {{x}^{2}}{36}}
-{\frac {127\,{x}^{3}}{1728}}&1-x+{\frac {35\,{x}^{2}}{144}}-{\frac {{
x}^{3}}{72}}&1-{\frac {11\,x}{12}}+{\frac {5\,{x}^{2}}{24}}-{\frac {29
\,{x}^{3}}{1728}}&1-x+{\frac {23\,{x}^{2}}{144}}+{\frac {5\,{x}^{3}}{
                   432}}\end {array} \right] \]
         while the matrix $G^{\geq 0}(x)$ computed using \eref{eq:Giojj} gives
 \[G^{\geq 0}(x)= \left[ \begin {array}{cccc} 1&{\frac {x \left( 2\,{x}^{2}-11\,x-24
 \right) }{ \left( x-12 \right)  \left( -4+x \right)  \left( 2\,x-3
 \right) }}&{\frac {-x \left( 43\,{x}^{2}-144\,x+576 \right) }{29\,{x}
^{3}-360\,{x}^{2}+1584\,x-1728}}&{\frac {x \left( 17\,{x}^{2}+21\,x+36
 \right) }{5\,{x}^{3}+69\,{x}^{2}-432\,x+432}}\\ \noalign{\medskip}{
\frac {x \left( 7\,x+12 \right)  \left( x-12 \right) }{127\,{x}^{3}-48
\,{x}^{2}+1440\,x-1728}}&1&{\frac { \left( 7\,x+12 \right) x \left( -
36+11\,x \right) }{29\,{x}^{3}-360\,{x}^{2}+1584\,x-1728}}&{\frac {
 \left( 17\,{x}^{2}-123\,x+180 \right) x}{5\,{x}^{3}+69\,{x}^{2}-432\,
x+432}}\\ \noalign{\medskip}{\frac {-x \left( 41\,{x}^{2}+24\,x+144
 \right) }{127\,{x}^{3}-48\,{x}^{2}+1440\,x-1728}}&\,{\frac {-3
 \left( 2\,{x}^{2}-15\,x+24 \right) x}{ \left( x-12 \right)  \left( -4
+x \right)  \left( 2\,x-3 \right) }}&1&{\frac { \left( -31\,{x}^{2}-69
\,x-36 \right) x}{5\,{x}^{3}+69\,{x}^{2}-432\,x+432}}
\\ \noalign{\medskip}{\frac {-x \left( -24+5\,x \right)  \left( x-12
 \right) }{127\,{x}^{3}-48\,{x}^{2}+1440\,x-1728}}&{\frac { \left( 10
\,{x}^{2}-31\,x-12 \right) x}{ \left( x-12 \right)  \left( -4+x
 \right)  \left( 2\,x-3 \right) }}&-{\frac {x \left( -36+11\,x
 \right)  \left( -24+5\,x \right) }{29\,{x}^{3}-360\,{x}^{2}+1584\,x-
1728}}&1\end {array} \right]\]
and
\[\det(\Id_d-x \M) = {\frac { \left( -1+x \right)  \left( 5\,{x}^{3}-15\,{x}^{2}+54\,x-216
 \right) }{216}}.\]
Finally, for each $u$, the values of $\sum_v G_{u,v}^{\geq 0}(x) \pi_v(x)$ can be checked to be equal to $\det(\Id-x \M)/(1-x)$, and the statement of \Cref{lem:essential} can also be ``checked'' on this example.

\begin{figure}[h!]
  \centerline{\includegraphics[width=10cm]{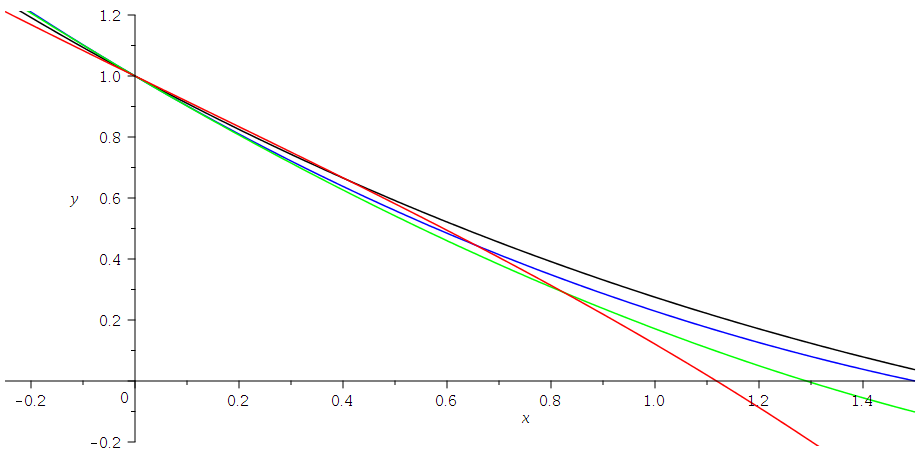}}
  \captionn{The four function $\pi_j(x)$, for $j$ from 1 to 4, with colors red, blue, black and green in this order. They are positive for $x\in[0,1]$ (as proved in \Cref{pro:qdt}). }
\end{figure}

\bibliographystyle{abbrv}

\end{document}